\documentclass[12pt,a4paper,reqno]{amsart}
\usepackage{amsthm,amsfonts,amsmath,amssymb}
\usepackage{mathrsfs}
\usepackage[top=1.6in, bottom=1.3in, left=1.3in, right=1.3in]{geometry}
\usepackage[colorlinks,linkcolor=blue,citecolor=blue,urlcolor=blue,hypertexnames=true]{hyperref}
\usepackage{enumerate}

\usepackage{booktabs}

\author{Ayan Nath}
\address{Kaliabor College, Kuwaritol, Assam, India}
\email{ayannath7744@gmail.com}

\author{Abhishek Jha}
\address{Indraprastha Institute of Information Technology, New Delhi, India}
\email{abhishek20553@iiitd.ac.in}

\subjclass[2010]{Primary: 11A25, Secondary: 11B65, 11N37}

\keywords{Euler's Totient Function, Factorials, Divisibility}

\title[On quotients of values of Euler's function on factorials]
{On quotients of values of Euler's function on factorials}

\theoremstyle{theorem}
\newtheorem{theorem}{Theorem}[section]
\newtheorem{lemma}[theorem]{Lemma}
\newtheorem{corollary}[theorem]{Corollary}

\newtheorem{definition}[theorem]{Definition}
\newtheorem{proposition}[theorem]{Proposition}
\newtheorem{remark}[theorem]{Remark}
\newtheorem*{notations}{Notations}
\newtheorem{hypothesis}[theorem]{Hypothesis}

\newcommand{\T}{\mathcal T}

\newcommand{\Mod}[1]{\ (\mathrm{mod}\ #1)}
\newcommand{\logx}{\log x}

\newcommand{\NN}{\mathbb N}

\renewcommand{\O}{\mathcal O}
\renewcommand{\phi}{\varphi}

\usepackage{mathtools}
\DeclarePairedDelimiter{\ceil}{\lceil}{\rceil}
\DeclarePairedDelimiter{\floor}{\lfloor}{\rfloor}

\begin{document}

\maketitle

\begin{abstract}
Recently, there has been some interest in values of arithmetical functions on members of special sequences, such as Euler's totient function $\phi$ on factorials, linear recurrences, etc.
In this article,
we investigate, for given positive integers $a$ and $b$, the least positive integer $c=c(a,b)$ such that the quotient
$\phi(c!)/\phi(a!)\phi(b!)$ is an integer. We derive results on the limit of the ratio $c(a,b)/(a+b)$ as $a$ and $b$ tend to infinity.
Furthermore, we show that $c(a,b)>a+b$ for all pairs of positive integers $(a,b)$ with an exception of a set of density zero.
\end{abstract}

\section{Introduction}
In recent years, there has been some interest in values of arithmetical functions, especially Euler's totient function $\phi,$ on members of special sequences.
Baczkowski et al. in \cite{luca:ijnt} investigated
arithmetic functions and factorials, precisely, $\phi(n!), d(n!)$ and
$\sigma(n!),$ where $d$ is the divisor counting function and $\sigma$ is summatory functions for divisors.
In \cite{luca:jnt-sequence}, Luca and Shparlinski obtained asymptotic formulas for moments of certain arithmetic functions with linear recurrence sequences. 
Further, Luca in \cite{luca:fib} considered $\phi(F_n),$ where $F_n$ is the $n$th Fibonacci number.
In \cite{luca:jnt-catalan}, Luca and St\u{a}nic\u{a} investigated quotients of the form
$\phi(C_m)/\phi(C_n),$ where $C_n$ is the $n$th Catalan number.

 Luca and St\u{a}nic\u{a} in \cite{luca:phinomial} considered an analogue
of binomial coefficients constructed by the Euler's totient function $\phi$, which were previously proven to be 
integral by Edgar in \cite{edgar:integers}. The authors defined
the $\phi$-\textit{actorial} as $n!_{\phi}=\phi(1)\phi(2)\cdots\phi(n),$ and the
\textit{phinomial} coefficient as
$$\binom{a+b}{a}_{\phi}=\frac{(a+b)!_{\phi}}{a!_{\phi}b!_{\phi}}.$$ 
In a similar spirit, we consider the quotient 
$$\frac{\phi((a+b)!)}{\phi(a!)\phi(b!)}.$$
Now, it does not take long to see that the above quantity is not always an integer.
In fact, the quotient is not an integer for all pairs $(a,b)$ with an exception of a set of density zero, as implied by Theorem
\ref{theorem:density}, though there do exist arbitrarily 
large $a$ and $b$ such that the expression is an integer; indeed, it is an integer
for $(a,b)=(n,\phi(n!)-1)$
(see Proposition \ref{lemma:low-ratio} for more such pairs).

Many interesting divisiblities in number theory are of the form $f(a)f(b)\mid f(c)$ 
where $f:\NN\to\NN$ is a function.
A classical result of Erd\H{o}s \cite{erdos:aufgaben} states there is an absolute constant $c$ so that if $a!b!$ divides $n!$ then $a+b< n +c \log n,$ but for infinitely many values of 
$n$ and some positive constant $c$, it follows that $n!/a!b!$ is an integer with $a+b=\floor{n+c\log n}.$ 
In fact, it is true that $(2n)!/(n!\floor{n+c\log n}!)$ is an integer for all $n$ with an exception of a set of density zero (see \cite{erdos:density}).
This motivates us to consider the quotient
$$\frac{\phi(c!)}{\phi(a!)\phi(b!)}.$$ 
As $\phi(n!)$ divides $\phi(m!)$ whenever $m\ge
n,$ it makes sense to study the least positive integer $c$ such that $\phi(a!)\phi(b!)$ divides
$\phi(c!).$ Let us denote the least such $c$ for a pair $(a,b)$ as $c(a,b).$ 
To understand the behaviour of $c(a,b)$ we plot a graph between $a+b$ and
$c(a,b)$ for all pairs $(a,b)$ with $1\le a,b\le 100$.
With the help of a computer, we obtain the plot shown in Figure \ref{fig1}.
\begin{figure}[ht]
\centering
\hspace*{-0.7cm}
\includegraphics[scale=0.7]{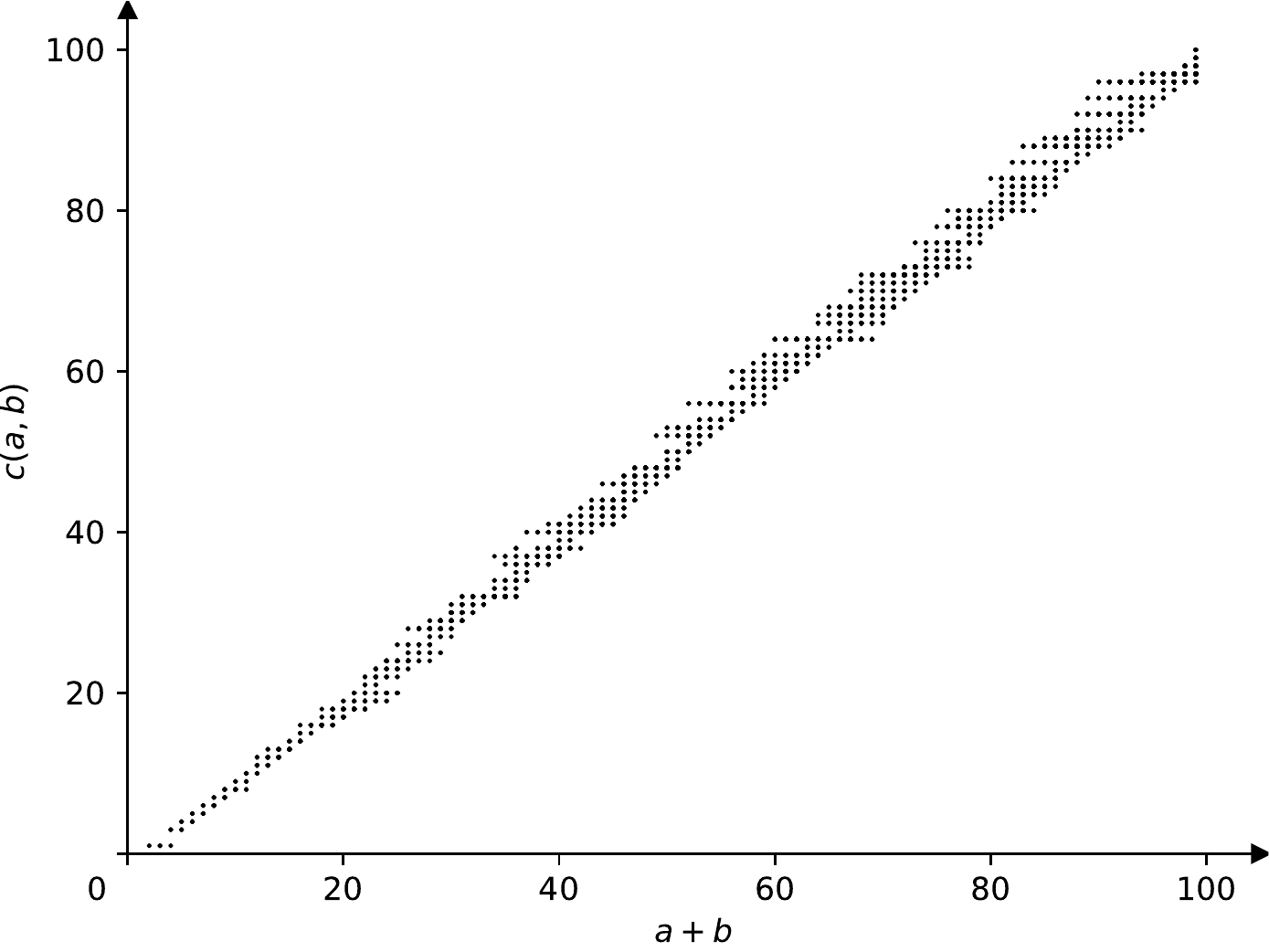}
\caption{Plot of $c(a,b)$ versus $a+b$.}\label{fig1}
\end{figure}

One immediately observes that the plot resembles the line $x=y.$ This suggests that $c(a,b)$ is close to $a+b,$ which motivates
us to study the ratio $r(a,b)=c(a,b)/(a+b)$ for pairs $(a,b).$

\begin{definition}
    Define $c(a,b)$ as the least positive integer $c$ such that $\phi(a!)\phi(b!)$ divides $\phi(c!),$ and denote the ratio $c(a,b)/(a+b)$ as $r(a,b).$
\end{definition}

It is observed using a computer that almost always $r(a,b)>1$.
Table \ref{table1} shows the proportion of pairs $(a,b)$ with $1\le a, b \le N$
such that $r(a,b)>1$ for various values of $N$.

\begin{table}[ht]
\caption{Proportions of pairs $(a,b)$ such that $r(a,b)>1.$}\label{table1}
\centering
\begin{tabular}{c|cccccccc}
\toprule
        $N$ & 100 & 200 & 300 & 400 & 500 & 600 & 700 & 800\\
        Proportion & 0.249 & 0.643 & 0.757 & 0.819 & 0.864 & 0.882 & 0.903 & 0.918\\
\bottomrule
\end{tabular}
\end{table}

The following theorem proved in this article confirms the evidence obtained in Table \ref{table1}.

\begin{theorem}\label{theorem:density}
    For all pairs of positive integers $(a,b),$ we have $r(a,b)>1$ with an exception of a set of density zero.
\end{theorem}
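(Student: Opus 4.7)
The plan is to exploit the 2-adic valuation of $\phi(n!)$: I show that for almost every pair $(a,b)$, $v_2(\phi(a!)) + v_2(\phi(b!)) > v_2(\phi((a+b)!))$, which already forces $\phi(a!)\phi(b!) \nmid \phi((a+b)!)$ and hence $c(a,b) > a+b$, i.e., $r(a,b) > 1$.

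From $\phi(n!) = \prod_{p \le n} p^{v_p(n!)-1}(p-1)$ and Legendre's formula $v_2(n!) = n - s_2(n)$ (with $s_2(n)$ the binary digit sum), I would first derive, for $n \ge 2$,
\[
v_2(\phi(n!)) = n - s_2(n) - 1 + f_2(n), \qquad f_2(n) := \sum_{p \le n} v_2(p-1).
\]
The 2-adic divisibility condition then rearranges to
\[
f_2(a) + f_2(b) - f_2(a+b) \;\le\; s_2(a) + s_2(b) - s_2(a+b) + 1,
\]
whose right-hand side is bounded by $O(\log(a+b))$. Hence it suffices to show the left-hand side exceeds this $O(\log(a+b))$ bound for all pairs outside a density-zero set.

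For the main estimate I write $f_2(n) = \sum_{k \ge 1} \pi(n; 2^k, 1)$. Siegel--Walfisz in the range $2^k \le (\log n)^A$, combined with the trivial bound $\pi(n; 2^k, 1) \le n/2^{k-1}+1$ for larger $k$, yields
\[
f_2(n) = 2\,\mathrm{Li}(n) + O\!\left(\frac{n}{(\log n)^A}\right)
\]
for any fixed $A > 0$ (the factor $2$ coming from $\sum_{k\ge 1} 1/\phi(2^k) = 2$). A two-term Taylor expansion of $\mathrm{Li}$ at $\log n$, with $n = a+b$ and $\alpha := \min(a,b)/n$, then gives
\[
f_2(a) + f_2(b) - f_2(a+b) = 2H(\alpha)\,\frac{a+b}{\log^2(a+b)} + O\!\left(\frac{a+b}{\log^3(a+b)}\right),
\]
where $H(\alpha) = -\alpha\log\alpha - (1-\alpha)\log(1-\alpha)$ is the binary entropy function.

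To conclude, fix $\delta \in (0,1/2)$ and consider the set $G_N := \{(a,b) \in [1,N]^2 : \min(a,b) \ge \delta\max(a,b) \text{ and } \min(a,b) \ge \sqrt{N}\}$; its complement in $[1,N]^2$ has density at most $\delta + O(N^{-1/2})$. For $(a,b) \in G_N$ one has $\alpha \ge \delta/(1+\delta)$ and thus $H(\alpha) \ge c_\delta > 0$, so the previous display gives $f_2(a)+f_2(b)-f_2(a+b) \gg_\delta (a+b)/\log^2(a+b)$, which exceeds the $O(\log N)$ bound on the right-hand side once $N$ is large. Thus the 2-adic obstruction holds on all of $G_N$; the exceptional set has density at most $\delta + o(1)$, and letting $\delta \to 0$ finishes the proof. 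The principal difficulty is securing the asymptotic $f_2(n) = 2\,\mathrm{Li}(n) + O(n/(\log n)^A)$ with an error term small enough to dominate the $(a+b)/\log^2(a+b)$ main term, which demands careful splitting of $\sum_k \pi(n; 2^k, 1)$ at $2^k \asymp (\log n)^A$ and a uniform Siegel--Walfisz input on the small-modulus side; the remaining steps are routine prime-number-theorem bookkeeping together with elementary calculus on the entropy.
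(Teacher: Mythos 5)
Your argument is correct and is essentially the paper's own proof: both detect the failure of $\phi(a!)\phi(b!)\mid\phi((a+b)!)$ through the $2$-adic valuation, combining the Siegel--Walfisz estimate $\sum_{p\le x}\nu_2(p-1)=2x/\log x+O(x/\log^{K}x)$ (the paper's Lemma \ref{vqbound}) with the $O(\log(a+b))$ carry/digit-sum bound from Kummer--Legendre and the superadditivity defect of $x/\log x$, which is exactly your $(a+b)H(\alpha)/\log^{2}(a+b)$ term. The only cosmetic difference is that the paper's Lemma \ref{lemma:density-helper} establishes the obstruction in one stroke on the wider range $a>b/(\log b)^{A}$, whereas you fix $\delta$, work on $\min(a,b)\ge\delta\max(a,b)$, and let $\delta\to0$; both routes give density zero.
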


Seeing Figure \ref{fig1}, it is natural to ask whether $\lim_{a,b\to \infty}r(a,b)$ exists, and if yes, does it equal $1$? Or, if the limit does not exist, what are the values of the limit inferior and the limit superior? The following theorem shows the exact value of the limit inferior.

\begin{theorem}\label{theorem:liminf}
    $$\liminf_{a,b\to \infty}r(a,b)= 1.$$
\end{theorem}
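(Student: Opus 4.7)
My plan is to establish the identity by showing $\liminf r(a,b) \leq 1$ and $\liminf r(a,b) \geq 1$ separately. For the upper bound, it suffices to exhibit one sequence of pairs $(a_n, b_n)$ with $a_n, b_n \to \infty$ and $r(a_n, b_n) \leq 1$. I would use the family from the introduction: setting $(a_n, b_n) = (n, \phi(n!) - 1)$, the integrality asserted there (also covered by Proposition \ref{lemma:low-ratio}) gives $\phi(a_n!)\phi(b_n!) \mid \phi((a_n+b_n)!)$, so $c(a_n, b_n) \leq a_n + b_n$. Both $a_n, b_n$ tend to infinity, and $r(a_n, b_n) \leq 1$ for every $n$, yielding $\liminf r(a,b) \leq 1$.

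For the lower bound, I would argue by magnitude. The divisibility $\phi(a!)\phi(b!) \mid \phi(c!)$ with $c = c(a,b)$ forces $\phi(a!)\phi(b!) \leq \phi(c!)$. Taking logarithms and combining Mertens' theorem $\sum_{p \leq n}\log(1-1/p) = -\log\log n - \gamma + o(1)$ with Stirling's formula transforms this into
\begin{equation*}
(c \log c - c) - (a \log a - a) - (b \log b - b) \geq -O(\log \log(a+b)).
\end{equation*}
Writing $c = (a+b) - m$ and Taylor expanding (justified a posteriori, once $m = o(a+b)$ is verified) reduces the left-hand side to $f(a,b) - m\log(a+b) + O(m^2/(a+b))$, where $f(a,b) := (a+b)\log(a+b) - a\log a - b\log b$. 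The entropy-type bound $f(a,b) \leq (a+b)\log 2$ (with equality at $a=b$) then yields $m \leq (1+o(1)) \cdot (\log 2)(a+b)/\log(a+b)$. Dividing through by $a+b$ gives
\begin{equation*}
r(a,b) \geq 1 - \frac{\log 2}{\log(a+b)} + o(1),
\end{equation*}
which tends to $1$ as $a+b \to \infty$. In particular $\liminf_{a,b\to\infty} r(a,b) \geq 1$.

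The main obstacle is ensuring the magnitude estimate is uniform in the ratio $a/b$, covering both the balanced regime $a \asymp b$ and the skewed regime where one argument is much larger than the other. The inequality $f(a,b) \leq (a+b)\log 2$, which holds pointwise regardless of this ratio, dispenses with this concern at once. Secondary care is needed with error terms from Mertens and Stirling and with checking self-consistency of the Taylor expansion, but these are routine.
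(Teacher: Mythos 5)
Your proposal is correct, and while the upper-bound half ($\liminf\le 1$, via the pairs $(n,\phi(n!)-1)$) coincides with the paper's argument, your lower-bound half takes a genuinely different route. The paper proves the much stronger Lemma \ref{lemma:lower-bound}, namely $c(a,b)\ge a+b-b/\log^A b$ for any $A$, by computing $\nu_2(\T(a,b;c))$ with Legendre's formula and Lemma \ref{vqbound} (hence ultimately Siegel--Walfisz). You instead use only the archimedean consequence $\phi(a!)\phi(b!)\le\phi(c!)$ of the divisibility: Stirling, Mertens, and the entropy bound $(a+b)\log(a+b)-a\log a-b\log b\le(a+b)\log 2$ give $c(a,b)\ge a+b-(1+o(1))\,(\log 2)(a+b)/\log(a+b)$. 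This is completely elementary and uniform in $a/b$, but quantitatively weaker (an error of order $(a+b)/\log(a+b)$ rather than $b/\log^A b$); for the liminf statement the weaker bound suffices, since $r(a,b)\ge 1-(\log 2+o(1))/\log(a+b)\to 1$. Two small inaccuracies, neither fatal: the error term after Stirling and Mertens is $O(\log(a+b))$ rather than $O(\log\log(a+b))$ (still negligible against $(a+b)/\log(a+b)$), and the pair $(n,\phi(n!)-1)$ is not literally among those produced by Proposition \ref{lemma:low-ratio} (that construction requires $\prod_{p\le n}(p-1)\mid n+b$, which $b=\phi(n!)-1$ need not satisfy), though either family yields $\liminf\le 1$. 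You should also make the bootstrap explicit — e.g.\ first rule out $c<(a+b)/2$ by monotonicity of $n\log n-n$ before Taylor expanding — but as you say this is routine.
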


Studying the limit superior is equivalent to obtaining bounds on the ratio $r(a,b).$
We prove the following ``sharp'' upper bound on $r(a,b).$

\begin{theorem}\label{theorem:limsup}
    For all large positive integers $a$ and $b$, we have $r(a,b)\le \frac 98.$
\end{theorem}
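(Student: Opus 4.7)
I would take $c=\lceil 9(a+b)/8\rceil$ and verify $\phi(a!)\phi(b!)\mid\phi(c!)$ prime-by-prime. Writing $\phi(n!)=n!\prod_{p\le n}(1-1/p)$ and taking $q$-adic valuation for a prime $q$ gives
\[
\nu_q(\phi(n!))=\nu_q(n!)-\mathbf{1}[q\le n]+S_n(q),\qquad S_n(q):=\sum_{p\le n}\nu_q(p-1),
\]
where the sum is over primes, so the divisibility to be shown reduces to: for every prime $q$,
\[
\nu_q(c!)-\nu_q(a!)-\nu_q(b!) + \bigl(\mathbf{1}[q\le a]+\mathbf{1}[q\le b]-\mathbf{1}[q\le c]\bigr) + \bigl(S_c(q)-S_a(q)-S_b(q)\bigr) \ge 0.
\]

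Splitting by the size of $q$: for $q>\max(a,b)$ the inequality collapses to $\nu_q(c!)\ge\mathbf{1}[q\le c]$, which is immediate. For $\min(a,b)<q\le\max(a,b)$, assuming without loss of generality $a\le b$, it reduces to $(\nu_q(c!)-\nu_q(b!))+(S_c(q)-S_b(q))\ge 0$, a sum of two manifestly nonnegative differences. The essential case is $q\le\min(a,b)$, where the inequality becomes
\[
\nu_q(c!)-\nu_q(a!)-\nu_q(b!) + 1 \;\ge\; S_a(q)+S_b(q)-S_c(q).
\]
Since $c\ge a+b$, integrality of the multinomial coefficient $\binom{c}{a,\,b,\,c-a-b}$ gives $\nu_q(c!)-\nu_q(a!)-\nu_q(b!)\ge\nu_q((c-a-b)!)$; Legendre's formula combined with $c-a-b\ge(a+b)/8$ then yields a lower bound $\ge (a+b)/\bigl(8(q-1)\bigr)-O(\log(a+b)/\log q)$. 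On the right, the prime number theorem in arithmetic progressions gives $S_n(q)\sim q\pi(n)/(q-1)^2$, and a short computation shows that for $c\ge 9(a+b)/8$ and $a,b$ large the quantity $S_a(q)+S_b(q)-S_c(q)$ is nonpositive for each fixed $q$, so the $S$-terms actually help.

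The main obstacle is making these estimates uniform in $q$ with effective constants, so as to yield a concrete threshold on $a+b$. For fixed small $q$ the Legendre bound dwarfs the $S$-contribution, and for $q$ close to $\min(a,b)$ the values $S_a(q),S_b(q),S_c(q)$ are small enough to control by directly counting primes in the arithmetic progression $1\pmod q$ lying in short intervals. The delicate intermediate regime, roughly $q$ between a fixed power of $\log(a+b)$ and $\sqrt{a+b}$, is where the constant $1/8$ is crucial: one invokes an effective form of PNT in arithmetic progressions (Siegel--Walfisz combined with Brun--Titchmarsh-type upper bounds, or explicit Rosser--Schoenfeld inequalities) to verify that $(a+b)/\bigl(8(q-1)\bigr)$ majorises an explicit upper bound for $|S_a(q)+S_b(q)-S_c(q)|$ for all such $q$ simultaneously. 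The fraction $1/8$ is the slack needed to absorb the secondary terms in these prime-counting estimates; a constant below $9/8$ would demand sharper prime-counting inputs.
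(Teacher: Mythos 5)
Your reduction is sound and your architecture matches the paper's: the same choice $c\approx\tfrac98(a+b)$, the same prime-by-prime valuation identity, the same trivial disposal of $q>\min(a,b)$, and the same use of Legendre/multinomial integrality to extract the budget $\approx\frac{a+b}{8(q-1)}$ from the block $(a+b+1)\cdots c$. Small and fixed $q$ (Siegel--Walfisz) and the intermediate range $q\le\sqrt{a}$ (where Brun--Titchmarsh does give $S_a(q)+S_b(q)\ll\frac{a+b}{(q-1)\log a}$, comfortably below $\frac{a+b}{8(q-1)}$ once $a,b$ are large) are handled essentially as you describe; the paper uses an explicit sieve bound $0.23a/(q-1)$ instead of Brun--Titchmarsh there, but either works.

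The genuine gap is the regime $q\in(\sqrt{a},a]$ with $a\le b$, and in particular $q\gg a/C$ for bounded $C$, which you dismiss with ``the values $S_a(q),S_b(q),S_c(q)$ are small enough to control by directly counting primes in short intervals.'' Here $S_a(q)=\pi(a;2q,1)$ is the number of primes among the $k=\lfloor a/(2q)\rfloor$ integers $2q+1,4q+1,\ldots,2qk+1$, while the total budget (Legendre term plus the $+1$ from the indicator) is exactly $\lfloor a/(4q)\rfloor+1=\lfloor k/2\rfloor+1$. So you must show that \emph{at most} $\lfloor k/2\rfloor+1$ of these $k$ terms are prime, for \emph{every} $k\ge1$ --- a saving of exactly a factor of $2$ over the trivial count, with zero asymptotic slack (the paper's Lemma 5.4 records that equality occurs, e.g.\ at $k=4$, $q=11$, and this near-tightness is precisely what powers the matching lower bound in Theorem 1.5). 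None of your proposed tools delivers this: Brun--Titchmarsh for modulus $2q$ gives $\pi(a;2q,1)\le\frac{2a}{(q-1)\log(a/(2q))}\approx\frac{4k}{\log k}$, which exceeds $\lfloor k/2\rfloor+1$ for all $k$ up to about $e^8$, i.e.\ fails exactly when $q$ is within a bounded factor of $a$; Siegel--Walfisz and Rosser--Schoenfeld say nothing about individual moduli of this size. The paper closes this case with a bespoke combinatorial lemma: sieving the progression by $3$, $5$, $7$ shows at most $0.46k+7$ of the terms are prime (sufficient for $k\ge174$), and a finite computer verification modulo $105$ handles $k\le173$. Without an input of this kind your argument does not close, and since the inequality is tight, no purely asymptotic prime-counting estimate can substitute for it.
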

Unfortunately, it turns out that the sequence $r(a,b)$ fluctuates between
values; it never stabilizes. Figure \hyperref[fig2]{2} shows a plot of $n$ versus $r(n,n).$
\begin{figure}[ht]
\hspace*{-0.7cm}
\centering
\includegraphics[scale=0.7]{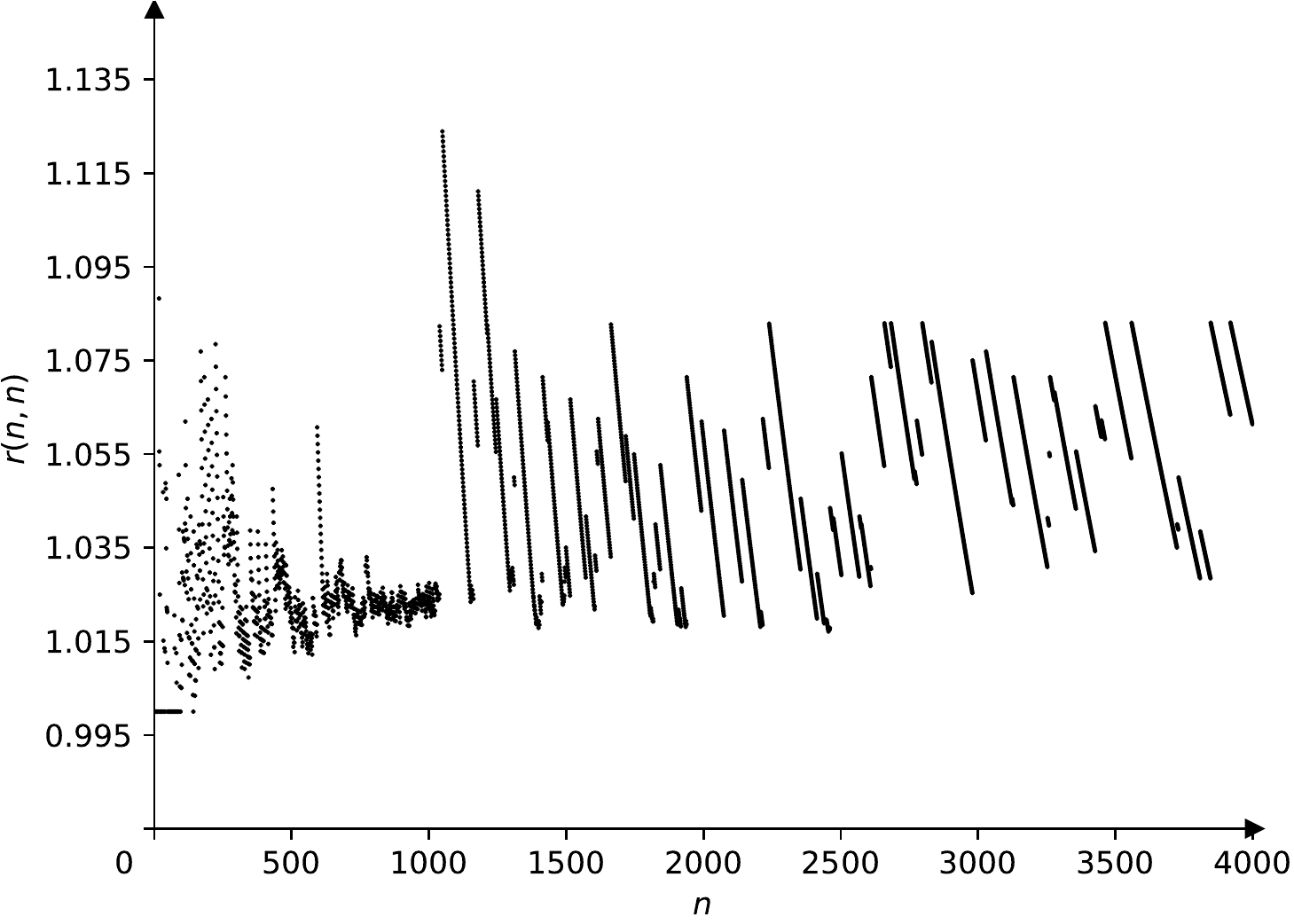}
\caption{Fluctuation of $r(n,n).$}\label{fig2}
\end{figure}

Under the hypothesis of Dickson's conjecture, a very well-believed and intuitive hypothesis in number theory, we prove the following theorem showing that the limit does not exist. 
\begin{theorem}\label{theorem:limsup-dickson}
    Assuming Dickson's conjecture, there are infinitely many positive integers $n$ such that $$r(n,n)\ge\frac{9}{8}-\frac{9}{8n}.$$
\end{theorem}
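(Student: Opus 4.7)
The plan is to establish, under Dickson's conjecture, an infinite family of primes $\ell$ for which $n=8\ell+1$ satisfies the stated inequality. The strategy is to use a single prime $\ell$ (of size roughly $n/8$) as an \emph{obstruction prime}: one shows that $\nu_\ell(\phi(c!))<2\nu_\ell(\phi(n!))$ for all $c<9(n-1)/4$, which already forces $c(n,n)\ge 9(n-1)/4$. The computational starting point is the factorisation $\phi(m!)=\prod_{p\le m}p^{\nu_p(m!)-1}(p-1)$, yielding for every prime $\ell\le m$ the identity
\[
\nu_\ell(\phi(m!)) \;=\; \nu_\ell(m!)-1+\sum_{\substack{p\le m \\ p\equiv 1\,(\bmod\,\ell)}}\nu_\ell(p-1).
\]
The idea is then to choose $\ell$ and $n$ so that $\nu_\ell(\phi(n!))$ receives a big boost from several primes $p\equiv 1\pmod{\ell}$ lying just below $n$, while a long interval above $n$ contains no such primes.

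Concretely, I take $n=8\ell+1$ with $\ell>18$ prime, in the configuration where $\ell,\,2\ell+1,\,6\ell+1,\,8\ell+1$ are all prime but $4\ell+1,\,10\ell+1,\,12\ell+1,\,14\ell+1,\,16\ell+1$ are all composite. Because $k\ell+1$ is even whenever $k$ is odd (recalling that $\ell$ is odd), the primes $\le n$ that are $\equiv 1\pmod{\ell}$ are exactly $2\ell+1$, $6\ell+1$, $8\ell+1$; so $\nu_\ell(\phi(n!))=(8-1)+3=10$ and the divisibility $\phi(n!)^2\mid\phi(c!)$ demands $\nu_\ell(\phi(c!))\ge 20$. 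For $c\in(n,18\ell]$ the compositeness conditions guarantee that no further primes $\equiv 1\pmod{\ell}$ appear, and since $\ell>18$ one has $\ell^2>18\ell\ge c$, so Legendre's formula gives $\nu_\ell(c!)=\lfloor c/\ell\rfloor$ and hence $\nu_\ell(\phi(c!))=\lfloor c/\ell\rfloor+2$. This quantity first reaches $20$ precisely at $c=18\ell=9(n-1)/4$, whence $c(n,n)\ge 18\ell$ and therefore $r(n,n)\ge 18\ell/(2(8\ell+1))=9/8-9/(8n)$.

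The remaining (and main) task is to supply infinitely many such $\ell$. For the five compositeness conditions I impose $\ell\equiv 2\pmod 3$ (so $3$ divides each of $4\ell+1,\,10\ell+1,\,16\ell+1$), $\ell\equiv 1\pmod 5$ (so $5\mid 14\ell+1$), and $\ell\equiv 4\pmod 7$ (so $7\mid 12\ell+1$); by the Chinese Remainder Theorem this is $\ell\equiv 11\pmod{210}$. Writing $\ell=210k+11$, the four primality requirements become linear forms in $k$: namely $210k+11$, $420k+23$, $1260k+67$, $1680k+89$. The principal (but routine) obstacle is verifying admissibility for Dickson's conjecture. The primes $2,3,5,7$ divide the leading coefficient $210$ of every form, while none of the constants $11,23,67,89$ is divisible by any of $2,3,5,7$, so each form is a nonzero constant modulo those four small primes; and for each prime $q\ge 11$ each form vanishes on at most one residue class of $k\pmod q$, leaving at least $q-4$ admissible residues. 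Dickson's conjecture then produces infinitely many $k$ for which all four forms are simultaneously prime, and each resulting $\ell>18$ yields an $n=8\ell+1$ satisfying $r(n,n)\ge 9/8-9/(8n)$.
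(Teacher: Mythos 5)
Your proof is correct and follows essentially the same route as the paper: you take $n=8q+1$ for a prime $q$ with $2q+1,6q+1,8q+1$ prime and the relevant even multiples $kq+1$ composite (arranged via congruences mod $3,5,7$ plus Dickson's conjecture), and then use $\nu_q$ as the obstruction forcing $c(n,n)\ge 18q=\tfrac94(n-1)$. The only differences are cosmetic: you track $\nu_\ell(\phi(m!))$ directly rather than through the paper's product decomposition of $\T(n,n;2n+m)$, you impose compositeness of $4\ell+1$ instead of $18\ell+1$ (both work, and $18\ell+1$ in fact lies outside the relevant range anyway), and you spell out the Dickson admissibility check more explicitly.
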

More importantly, Theorem \ref{theorem:limsup-dickson} shows that
Theorem \ref{theorem:limsup} is sharp in the sense that the constant $9/8$ cannot be improved any further.
It is now easy to see that Theorems \ref{theorem:limsup} and \ref{theorem:limsup-dickson} imply the following.
\begin{corollary}\label{corollary:limsup-dickson}
    Assuming Dickson's conjecture, we have
    $$\limsup_{a,b\to\infty}r(a,b) = \frac{9}{8}.$$
\end{corollary}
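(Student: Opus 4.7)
The plan is to combine the two theorems that immediately precede the corollary. Theorem \ref{theorem:limsup} gives an upper bound that holds for all sufficiently large $a,b$, namely $r(a,b)\le 9/8$, so passing to $\limsup$ along any mode of $(a,b)\to\infty$ yields
$$\limsup_{a,b\to\infty} r(a,b) \le \frac{9}{8}.$$

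For the matching lower bound, I would restrict to the diagonal subsequence. Under Dickson's conjecture, Theorem \ref{theorem:limsup-dickson} produces an infinite sequence $n_1<n_2<\cdots$ of positive integers with $r(n_k,n_k)\ge 9/8-9/(8n_k)$. Since both coordinates of the pair $(n_k,n_k)$ tend to infinity, this is an admissible sequence for the joint $\limsup$, and letting $k\to\infty$ forces
$$\limsup_{a,b\to\infty} r(a,b) \ge \lim_{k\to\infty}\left(\frac{9}{8}-\frac{9}{8n_k}\right) = \frac{9}{8}.$$

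Combining the two inequalities yields the claimed equality. There is essentially no obstacle here beyond checking the mild bookkeeping point that $\limsup_{a,b\to\infty}$ is taken over all pairs with $\min(a,b)\to\infty$, so that the diagonal subsequence $(n_k,n_k)$ is indeed eligible; once that is noted, the corollary is a one-line consequence of the two preceding theorems.
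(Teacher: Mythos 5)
Your proposal is correct and matches the paper exactly: the paper states the corollary as an immediate consequence of Theorems \ref{theorem:limsup} and \ref{theorem:limsup-dickson}, obtaining the upper bound from the former and the lower bound from the diagonal sequence produced by the latter. Nothing further is needed.
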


The paper is organized as follows. In Section \ref{section:prelim}, we state
some well-known results and prove a preliminary lemma. In Section
\ref{section:density}, we show that almost all pairs $(a,b)$ satisfy $r(a,b)>1$
and hence prove Theorem \ref{theorem:density}.
We present the proofs of Theorems \ref{theorem:liminf} and \ref{theorem:limsup}
in Sections \ref{section:liminf} and \ref{section:limsup}, respectively. Finally, in Section \ref{section:limsup-dickson}, we prove
Theorem \ref{theorem:limsup-dickson} under the hypothesis of Dickson's
conjecture which concludes the result of Corollary
\ref{corollary:limsup-dickson}.

\begin{notations}
\normalfont
We employ Landau-Bachmann notations $\O$ and $o$ as well as their associated
Vinogradov notations $\ll$ and $\gg$ with their usual meanings. Throughout the
article, the letters $p$ and $q$ are reserved for primes, and the letters $a$ and $b$ will always denote positive integers. As usual, define $\pi(x;m,a)$ to be the number of primes $p<x$ such that $p\equiv a\Mod m.$ For a prime
$p$ and a non-zero integer $n,$ define $\nu_p(n)$ as the exponent of $p$ in the prime factorisation of $n.$
For a non-zero rational number $r=a/b$ where $a$ and $b$ are integers, define $\nu_p(r)=\nu_p(a)-\nu_p(b)$ for any prime $p.$
\end{notations}

\section{Preliminaries}\label{section:prelim}
Here we list out some classical results which are going to be
helpful in our work. 
The following theorem is crucial in proving Lemma \ref{vqbound}.
\begin{theorem}[Siegel-Walfisz]\label{siegel}
    Let $C$ be a positive constant. If $a$ and $q$ are two relatively prime positive integers such that $a<q\ll (\log x)^C$, then
    $$ \left| \pi (x;q,a) - \frac{1}{\phi (q)} \cdot \frac{x}{\log{x}}  \right| \ll \frac{x}{(\log{x})^B}$$ 
    for some absolute constant $B>C+1.$
\end{theorem}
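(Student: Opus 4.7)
The statement is the classical Siegel--Walfisz theorem, and the only sensible route is the standard one via Dirichlet $L$-functions; I would essentially reproduce the argument found in Davenport's \textit{Multiplicative Number Theory}. The first move is to pass from $\pi(x;q,a)$ to $\psi(x;q,a):=\sum_{n\le x,\, n\equiv a\,(\mathrm{mod}\,q)}\Lambda(n)$ by partial summation, noting that proper prime powers contribute at most $O(\sqrt{x}\log x)$ and so are harmless for the error we seek. Next, I would use orthogonality of characters modulo $q$ to write
$$\psi(x;q,a)=\frac{1}{\phi(q)}\sum_{\chi\,(\mathrm{mod}\,q)}\bar\chi(a)\,\psi(x,\chi),\qquad \psi(x,\chi)=\sum_{n\le x}\chi(n)\Lambda(n),$$
so that the principal character supplies the expected main term $x/\phi(q)$ (up to $O(\log q)$ from ramified primes) and everything reduces to non-trivial bounds on $\psi(x,\chi)$ for non-principal $\chi$.

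The key analytic input is the explicit formula
$$\psi(x,\chi)=-\sum_{\rho}\frac{x^\rho}{\rho}+O(\log^2(qx)),$$
where $\rho$ ranges over the non-trivial zeros of $L(s,\chi)$. Combined with the classical Hadamard--de la Vall\'ee Poussin zero-free region
$$\sigma\ge 1-\frac{c}{\log\bigl(q(|t|+2)\bigr)},$$
valid for all non-principal $\chi$ with at most one exceptional real zero coming from at most one exceptional real character, a standard contour-shifting argument yields $\psi(x,\chi)\ll x\exp(-c'\sqrt{\log x})$ uniformly for $q\le(\log x)^C$, which comfortably beats $x/(\log x)^B$ for any fixed $B$.

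The genuine obstacle, and the reason the theorem is non-trivial, is the possible \emph{Siegel zero}: a real zero $\beta_1$ of some $L(s,\chi_1)$ with $\chi_1$ real, lying so close to $1$ that the contribution $x^{\beta_1}/\beta_1$ threatens the bound. This is controlled by Siegel's theorem, which guarantees that for any $\e>0$ one has $1-\beta_1\gg_\e q^{-\e}$, with an \emph{ineffective} implied constant. Feeding this into the estimate gives a saving of $\exp(-c_\e(\log x)^{1-\e})$ on the exceptional term, which for $q\ll(\log x)^C$ again dominates $x/(\log x)^B$ with plenty of room. Assembling the principal-character main term and these bounds for the non-principal characters, dividing by $\phi(q)$, and translating back from $\psi$ to $\pi$ by partial summation yields the theorem. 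The entire difficulty is concentrated in Siegel's theorem; every other step is routine analytic number theory, which is presumably why the authors are content to quote the result.
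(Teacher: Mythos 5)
The paper states this as a classical theorem and offers no proof of its own, and your outline is the standard Davenport-style argument (passage to $\psi(x;q,a)$, orthogonality of characters, the explicit formula with the Hadamard--de la Vall\'ee Poussin zero-free region, and Siegel's ineffective bound on the exceptional zero), which is correct and is exactly what the authors are implicitly deferring to. One caveat worth noting: this argument produces the main term $\mathrm{li}(x)/\phi(q)$, which differs from the paper's $\tfrac{1}{\phi(q)}\cdot\tfrac{x}{\log x}$ by a quantity of size $\asymp x/(\phi(q)(\log x)^2)$, so the error bound as literally written can only hold with $B\le 2$ (and hence the clause ``$B>C+1$'' fails for $C\ge 1$); this is an imprecision in the quoted statement --- harmless for the paper's applications, which only need a leading-order main term --- rather than a gap in your proof.
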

\begin{theorem}[Legendre]\label{legendre}
    For all positive integers $n$ and primes $p,$
    $$\nu_p(n!)=\sum_{i=1}^{\infty}\left\lfloor\frac{n}{p^i}\right\rfloor=\frac{n-s_p(n)}{p-1},$$
    where $s_p(n)$ is the sum of the digits of $n$ in base-$p.$
\end{theorem}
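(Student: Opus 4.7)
The plan is to prove the two equalities separately and entirely by elementary means, since both are classical combinatorial identities about the base-$p$ arithmetic of $n$. For the first equality, I would begin from the additivity of $\nu_p$ on products, writing $\nu_p(n!) = \sum_{m=1}^n \nu_p(m)$, and then rewrite each term through the tautological identity $\nu_p(m) = \sum_{i \ge 1} \mathbf{1}[p^i \mid m]$, valid because $\nu_p(m)$ counts precisely the positive integers $i$ such that $p^i$ divides $m$. Swapping the order of summation and observing that the multiples of $p^i$ in $\{1,\ldots,n\}$ are exactly $p^i, 2p^i, \ldots, \lfloor n/p^i\rfloor \cdot p^i$, one obtains
\[
\nu_p(n!) \;=\; \sum_{i\ge 1} \#\{1 \le m \le n : p^i \mid m\} \;=\; \sum_{i\ge 1} \left\lfloor \frac{n}{p^i}\right\rfloor,
\]
which is effectively a finite sum since the summands vanish as soon as $p^i > n$.

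For the second equality, I would pass to the base-$p$ expansion $n = \sum_{j=0}^{k} a_j\, p^j$ with digits $0 \le a_j < p$, so that $s_p(n) = \sum_{j=0}^k a_j$. The key observation is that dividing by $p^i$ and taking the floor simply drops the $i$ lowest digits in base $p$, giving $\lfloor n/p^i\rfloor = \sum_{j=i}^{k} a_j\, p^{j-i}$. Summing over $1 \le i \le k$ and interchanging the order of summation yields
\[
\sum_{i=1}^{k} \left\lfloor \frac{n}{p^i}\right\rfloor \;=\; \sum_{j=1}^{k} a_j \sum_{i=1}^{j} p^{j-i} \;=\; \sum_{j=1}^{k} a_j \cdot \frac{p^j - 1}{p-1} \;=\; \frac{1}{p-1}\left(\sum_{j=0}^{k} a_j p^j - \sum_{j=0}^{k} a_j\right) \;=\; \frac{n - s_p(n)}{p-1},
\]
where the $j=0$ terms may be adjoined freely to the middle expression since $a_0 \cdot p^0 - a_0 = 0$.

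Since this is a classical identity, I do not foresee any genuine obstacle. The only points requiring a small measure of care are the (trivial) justification of interchanging the infinite sums, which is legitimate because all but finitely many terms vanish, and the geometric series evaluation $\sum_{i=1}^{j} p^{j-i} = (p^j-1)/(p-1)$; the remaining manipulations are routine bookkeeping on the base-$p$ digits of $n$.
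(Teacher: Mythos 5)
Your argument is correct and complete: the double-counting step $\nu_p(n!)=\sum_{i\ge 1}\#\{m\le n: p^i\mid m\}$ and the digit-dropping identity $\lfloor n/p^i\rfloor=\sum_{j\ge i}a_jp^{j-i}$ followed by the geometric-series summation are exactly the standard derivation of both forms of Legendre's formula. The paper states this as a classical result without proof, so there is nothing to compare against; your write-up would serve as a correct self-contained justification.
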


\begin{theorem}[Kummer]\label{kummer}
    Let $p$ be a prime. The highest power of a prime $p$ that divides $\binom{n}{k}$ is the number of carries in the addition $k+(n-k)$ in base-$p.$
\end{theorem}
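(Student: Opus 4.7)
The plan is to derive Kummer's Theorem as an elementary consequence of Legendre's formula (Theorem \ref{legendre}) combined with a bookkeeping argument on digit sums under base-$p$ addition.

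First, applying Legendre's formula to the three factorials appearing in $\binom{n}{k}=n!/(k!\,(n-k)!)$ gives
\begin{equation*}
\nu_p\binom{n}{k} \;=\; \nu_p(n!) - \nu_p(k!) - \nu_p((n-k)!) \;=\; \frac{s_p(k) + s_p(n-k) - s_p(n)}{p-1}.
\end{equation*}
Hence the theorem reduces to showing that $s_p(k) + s_p(n-k) - s_p(n) = (p-1)C$, where $C$ denotes the number of carries that occur when $k$ and $n-k$ are added in base $p$.

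Next, I would write $k=\sum_i k_i p^i$, $n-k=\sum_i m_i p^i$, and $n=\sum_i n_i p^i$, and let $c_i$ denote the carry into position $i$, with $c_0=0$. The standard long-addition relation is $k_i + m_i + c_i = n_i + p\,c_{i+1}$ at every position $i$. Since $k_i + m_i + c_i \le 2(p-1) + 1 < 2p$, each $c_i$ lies in $\{0,1\}$, so $C=\sum_{i\ge 1} c_i$ is indeed the count of carries. Summing the position equations over $i\ge 0$, and using $c_0=0$ together with the fact that carries vanish beyond the top digit of $n$ (because $k+(n-k)=n$ produces no overflow), gives
\begin{equation*}
s_p(k) + s_p(n-k) + C \;=\; s_p(n) + pC.
\end{equation*}
Rearranging yields $s_p(k) + s_p(n-k) - s_p(n) = (p-1)C$, and substituting this back into the first display produces $\nu_p\binom{n}{k}=C$, which is precisely Kummer's statement.

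There is no genuine obstacle: the argument is a classical textbook computation. The only facts that require a line of justification are that each carry is $0$ or $1$ and that the carry sequence terminates, and both follow immediately from $k+(n-k)=n$ together with the trivial bound on digits used above.
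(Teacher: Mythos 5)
Your proof is correct and complete: the reduction via Legendre's formula (Theorem \ref{legendre}) to the identity $s_p(k)+s_p(n-k)-s_p(n)=(p-1)C$, followed by summing the long-addition relations $k_i+m_i+c_i=n_i+p\,c_{i+1}$, is the standard derivation of Kummer's theorem, and your justifications that each carry lies in $\{0,1\}$ and that the carry sequence terminates are exactly the two points that need checking. The paper itself states Kummer's theorem as a classical fact without proof, so there is no argument to compare against; your write-up supplies the expected textbook proof and is consistent with the paper's use of Legendre's formula.
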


We know that $\phi(n!)=n!\prod_{p\le n}\frac{p-1}{p}$. So, to calculate the exponent of $q$ in $\phi(n!)$, 
we are interested in estimating $\nu_q(\prod_{p<x}(p-1))$, which is exactly what the following lemma is about.

\begin{lemma}\label{vqbound}
    If $q$ is a fixed prime and $K$ is a real constant, then
    $$ \nu_{q} \left ( \prod_{p<x} (p-1) \right ) = \frac{q}{(q-1)^2} \cdot  \frac{x}{\log{x}} +\O \left (\frac{x}{\log^{K}x}  \right ).$$
\end{lemma}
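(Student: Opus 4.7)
The plan is to express the $q$-adic valuation as a sum over prime progressions and then apply Siegel--Walfisz on the short range and the trivial bound on the complementary range.

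\medskip
\noindent\textbf{Step 1: Rewrite as a sum of prime counts.} For any prime $q$,
\[
 \nu_q\!\left(\prod_{p<x}(p-1)\right) = \sum_{p<x}\nu_q(p-1) = \sum_{k\ge 1}\#\{p<x:q^k\mid p-1\} = \sum_{k\ge 1}\pi(x;q^k,1).
\]
Note the sum is actually finite: the terms vanish once $q^k>x$.

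\medskip
\noindent\textbf{Step 2: Split the range.} Fix a large constant $C$ (to be chosen in terms of $K$) and set $K_0=\lfloor C\log\log x/\log q\rfloor$, so that $q^{K_0}\asymp(\log x)^C$. Write
\[
 \sum_{k\ge 1}\pi(x;q^k,1) = \underbrace{\sum_{k=1}^{K_0}\pi(x;q^k,1)}_{S_1} + \underbrace{\sum_{k>K_0}\pi(x;q^k,1)}_{S_2}.
\]

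\medskip
\noindent\textbf{Step 3: The short range via Siegel--Walfisz.} For $k\le K_0$ we have $q^k\ll(\log x)^C$, so Theorem \ref{siegel} applies with modulus $q^k$, giving
\[
 \pi(x;q^k,1) = \frac{1}{\phi(q^k)}\cdot\frac{x}{\log x} + \O\!\left(\frac{x}{\log^B x}\right)
\]
for some $B>C+1$. Summing over $k\le K_0$ and using $\phi(q^k)=q^{k-1}(q-1)$,
\[
 S_1 = \frac{x}{\log x}\sum_{k=1}^{K_0}\frac{1}{q^{k-1}(q-1)} + \O\!\left(\frac{K_0\,x}{\log^B x}\right).
\]
Completing the geometric series introduces a tail error $\ll x/(q^{K_0}\log x)\ll x/(\log x)^{C+1}$, yielding
\[
 S_1 = \frac{q}{(q-1)^2}\cdot\frac{x}{\log x} + \O\!\left(\frac{x}{\log^{C+1}x}\right) + \O\!\left(\frac{K_0\,x}{\log^B x}\right).
\]

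\medskip
\noindent\textbf{Step 4: The long range by the trivial bound.} For $k>K_0$ we use $\pi(x;q^k,1)\le x/q^k$, since a residue class mod $q^k$ contains at most $\lceil x/q^k\rceil$ integers. Then
\[
 S_2 \le \sum_{k>K_0}\frac{x}{q^k} + \O(\log_q x) \ll \frac{x}{q^{K_0}} \ll \frac{x}{(\log x)^C}.
\]

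\medskip
\noindent\textbf{Step 5: Combine and choose constants.} Adding $S_1$ and $S_2$,
\[
 \nu_q\!\left(\prod_{p<x}(p-1)\right) = \frac{q}{(q-1)^2}\cdot\frac{x}{\log x} + \O\!\left(\frac{x}{\log^{C} x} + \frac{x\log\log x}{\log^B x}\right).
\]
Choosing $C=K$ and invoking Siegel--Walfisz with the associated $B>K+1$ absorbs the second error into the first, giving the claimed bound.

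\medskip
The main obstacle is calibrating the cutoff $K_0$: it must be large enough that the truncated geometric sum matches the infinite sum $q/(q-1)^2$ with error $\ll x/\log^K x$, yet small enough for Siegel--Walfisz to apply uniformly in $k\le K_0$. The choice $q^{K_0}\asymp(\log x)^K$ does both simultaneously, and the $O(\log\log x)$ factor arising from summing the $K_0$ Siegel--Walfisz errors is harmlessly absorbed by taking $B$ slightly larger than $K+1$.
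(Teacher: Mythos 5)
Your proposal is correct and follows essentially the same route as the paper: the same double-counting identity $\nu_q(\prod_{p<x}(p-1))=\sum_k \pi(x;q^k,1)$, the same split at $q^k\asymp(\log x)^C$, Siegel--Walfisz on the short range, and the trivial bound on the tail. The only (harmless) cosmetic difference is that you bound the tail $S_2$ by summing the geometric series $\sum_{k>K_0}x/q^k\ll x/(\log x)^C$, whereas the paper bounds each of the $\ll\log x$ terms by $x/(\log x)^C$ to get $x/(\log x)^{C-1}$; both suffice.
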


\begin{proof}
    By a simple double counting argument, observe that
    $$\nu_{q} \left ( \prod_{p<x} (p-1) \right ) = \sum_{q^n<x} \pi (x;q^n,1).$$
    We split the summation into two intervals: $1\le q^n< (\log{x})^C$ and $(\log{x})^C\le q^n<x.$ So,
    $$\sum_{q^n<x} \pi (x;q^{n} ,1) = \sum_{q^n<(\log{x} )^{C} } \pi (x;q^{n} ,1)+\sum_{(\log )^C\le q^n < x} \pi (x;q^{n} ,1).$$
    Using Theorem \ref{siegel},
    \begin{align*}
        \sum_{q^n<(\log{x} )^{C} } \pi (x;q^{n} ,1) &= \sum_{q^n<(\log{x} )^{C}} \left(\frac{x}{\phi(q^n)\log x} + \O\left(\frac{x}{\log^B x}\right)\right) \\ 
                                                    &= \frac{x}{\log x}\sum_{q^n<(\logx)^C}\frac{1}{q^{n-1}(q-1)} + \O\left(\frac{x\log\logx}{\log^B x}\right)\\
                                                    &= \frac{q}{(q-1)^2}\cdot\frac{x}{\logx}(1+\O(\log^{-C}x))+\O\left(\frac{x\log\logx}{\log^B x}\right)\\
                                                    &= \frac{q}{(q-1)^2}\cdot\frac{x}{\logx} + \O\left(\frac{x\log\logx}{\log^B x}\right),
    \end{align*}
    where $B$ is the constant from Theorem \ref{siegel}.  Also,
    \begin{align*}
        \sum_{(\log x)^C\le q^n < x } \pi (x;q^{n} ,1) \le  \sum_{(\log x)^C\le q^n < x} \frac{x}{q^n} = \O\left(\logx\frac{x}{(\logx)^C}\right)=\O\left(\frac{x}{\log^{C-1}x}\right).
    \end{align*}
    Since $B>C+1,$ the lemma follows. 
\end{proof}

For convenience, let us define $\T(a,b;c)$ as follows.
\begin{definition}
    Define $\T(a,b;c)$ by $$\T(a,b;c)=\frac{\phi(c!)}{\phi(a!)\phi(b!)}.$$
\end{definition}

\section{Pairs \texorpdfstring{$(a,b)$}{(a,b)} such that \texorpdfstring{$r(a,b)>1$}{r(a,b)>1}}\label{section:density}

In this section, we will prove Theorem \ref{theorem:density}.
\begin{lemma}\label{lemma:density-helper}
    Let $A$ be any constant. If $a$ and $b$ are positive integers such that $b\ge a>\frac{b}{(\log b)^A},$ then $r(a,b)>1$ for all sufficiently large $b.$
\end{lemma}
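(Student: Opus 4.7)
The plan is to exhibit the prime $q=2$ as a witness that $\nu_q(\phi(a!)\phi(b!)) > \nu_q(\phi((a+b)!))$, which forces $\phi(a!)\phi(b!)\nmid\phi((a+b)!)$ and hence $c(a,b)>a+b$. Starting from $\phi(n!)=n!\prod_{p\le n}(p-1)/p$, I read off, for $n\ge 2$,
$$\nu_2(\phi(n!))=\nu_2(n!)-1+\nu_2\Big(\prod_{p\le n}(p-1)\Big).$$
Combining this with Legendre's and Kummer's theorems yields
$$\nu_2(\phi(a!))+\nu_2(\phi(b!))-\nu_2(\phi((a+b)!))=V(a,b)-\nu_2\binom{a+b}{a}-1,$$
where $V(a,b):=\nu_2\prod_{p\le a}(p-1)+\nu_2\prod_{p\le b}(p-1)-\nu_2\prod_{p\le a+b}(p-1)$. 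Since $\nu_2\binom{a+b}{a}\le\log_2(a+b)=O(\log b)$ by Kummer, the task reduces to showing $V(a,b)\gg b/(\log b)^{A+2}$ for large $b$, which easily beats $\log b$.

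To estimate $V(a,b)$, I apply Lemma \ref{vqbound} with $q=2$ (so $q/(q-1)^2=2$) and $K=A+3$ at the three points $a$, $b$, $a+b$. The hypothesis $b/(\log b)^A\le a\le b$ forces $\log a$, $\log b$, $\log(a+b)$ to all lie in the interval $[\log b-A\log\log b,\,\log b+\log 2]$, so the three error terms combine to $O(b/(\log b)^{A+3})$, giving
$$V(a,b)=2\!\left(\frac{a}{\log a}+\frac{b}{\log b}-\frac{a+b}{\log(a+b)}\right)+O\!\left(\frac{b}{(\log b)^{A+3}}\right).$$
Rewriting the main term via the identity
$$\frac{a}{\log a}+\frac{b}{\log b}-\frac{a+b}{\log(a+b)}=\frac{a\log(1+b/a)}{\log a\cdot\log(a+b)}+\frac{b\log(1+a/b)}{\log b\cdot\log(a+b)},$$
I drop the nonnegative first summand and lower-bound only the second. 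Using $a/b\ge(\log b)^{-A}$ together with $\log(1+x)\ge x/2$ for $0<x\le 1$, I get $\log(1+a/b)\ge\tfrac{1}{2}(\log b)^{-A}$, so that with $\log(a+b)\le 2\log b$ the second summand is at least $b/(4(\log b)^{A+2})$.

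Assembling the pieces gives $V(a,b)\ge b/(3(\log b)^{A+2})$ for all sufficiently large $b$, which exceeds $\log_2(2b)+1$ and hence $\nu_2\binom{a+b}{a}+1$. Therefore $\nu_2(\phi(a!)\phi(b!))>\nu_2(\phi((a+b)!))$, which proves $c(a,b)>a+b$ and $r(a,b)>1$, as claimed. The main obstacle is the calibration between $A$ and $K$: the constant $K$ in Lemma \ref{vqbound} must be chosen strictly greater than $A+2$ so that the error is $o(b/(\log b)^{A+2})$, and the hypothesis $a>b/(\log b)^A$ is used in an essentially sharp way to ensure the main term is of the right magnitude. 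If $a$ were permitted to be much smaller than $b$ (say $a=\log b$), the second summand would shrink below the error term and a fresh idea would be required.
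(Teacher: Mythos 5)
Your proposal is correct and follows essentially the same route as the paper: both use the prime $2$ as the witness, Kummer's theorem to control $\nu_2\binom{a+b}{a}=O(\log b)$, and Lemma \ref{vqbound} with $K$ chosen larger than $A+2$ so that the main term $\frac{a}{\log a}+\frac{b}{\log b}-\frac{a+b}{\log(a+b)}\gg b/(\log b)^{A+2}$ dominates the error. The only cosmetic difference is that you extract the main term via the exact two-summand identity and keep the $b\log(1+a/b)$ piece, whereas the paper keeps the comparable piece $a\left(\frac{1}{\log a}-\frac{1}{\log(a+b)}\right)$ via a monotonicity argument.
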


\begin{proof}
    Set $c=a+b.$ We want to prove that $\T(a,b;c)$ is not an integer for all sufficiently large $b.$ By routine manipulations, we obtain that 
    $$\T(a,b;c)=\binom{a+b}{a}\prod_{p\le a}\frac{p}{p-1}\prod_{b<p\le c}\frac{p-1}{p}.$$
    We consider the largest power of $2$ dividing $\T(a,b;c)$ and claim that it is negative for all 
    large $b.$ Using Theorem
    \ref{kummer}, we know that $\nu_2\left(\binom{a+b}{a}\right)$ is the number
    of carries in the base-$2$ addition of $a$ and $b,$ which is at most
    $1+\log_{2}b.$
    Therefore, by Lemma \ref{vqbound}, we have
    \begin{align*}
        \nu_2(\T(a,b;c)) &= 2\left(\frac{c}{\log c}-\frac{a}{\log a}-\frac{b}{\log b}\right)+\O\left(\frac{b}{\log^{K}b}\right)\\
                       &\ll \frac{2b}{\log^{K-1}b} + 2\left(\frac{a+b}{\log (a+b)}-\frac{a}{\log a}-\frac{b}{\log b}\right)\\
                       &< 2\left(\frac{b}{\log^{K-1} b} +\frac{a}{\log (a+b)} - \frac{a}{\log a}\right).
    \end{align*}
    Since $b/a < \log^A b,$ the above expression is less than 
    $$2a\left(\frac{1}{\log^{K-1-A}b}+\frac{1}{\log(a+b)}-\frac{1}{\log a}\right).$$
    It is routine to check that $\frac{1}{\log(a+b)}-\frac{1}{\log a}$ is increasing in $a\in (0,\infty).$ Hence, as $a\le b,$ the above expression is bounded above by 
    $$2a\left(\frac{1}{\log^{K-1-A}b}+\frac{1}{\log b + \log 2}-\frac{1}{\log b}\right)< -\frac{\delta a}{\log^2 b}$$
    for some positive constant $\delta,$ provided $K-1-A>2,$ which can be ensured by taking $K$ large. And the proof is complete.
\end{proof}

With Lemma \ref{lemma:density-helper} in hand, it is easy to prove that the density of pairs $(a,b)$ such that $r(a,b)>1$ is $1.$ The proportion of pairs $(a,b)$ such that $a=b$ is $0$, so we can ignore them. It suffices to prove that the proportion of pairs $(a,b)$ with $a< b$ and $r(a,b)>1$ is $\tfrac 12$. Number of pairs $(a,b)$ such that $1\le a<b\le N$ and $r(a,b)>1$ is at least
\begin{align*}
    \sum_{b=N_0}^N\left((b-1)-\frac{b}{(\log b)^A}\right)&> \frac{N(N-1)}{2} - \frac{N^2}{(\log N)^A}+\O(1)\\
                                                         &=N^2\left(\frac 12 -\frac{1}{2N}-\frac{1}{(\log N)^A}+\O\left(\frac{1}{N^2}\right)\right),
\end{align*}
where $N_0$ is some constant only dependent on $a$ and $A.$
Thus, the density of such pairs is $\frac 12$, proving Theorem \ref{theorem:density}.
\section{The Limit Inferior}\label{section:liminf}
The proof of Theorem \ref{theorem:liminf} has two steps. First, we need to
prove that it is at least $1$, and then show that there are arbitrarily large $a$ and $b$ 
such that $r(a,b)\le 1.$ The following result implies that the
limit inferior is at least $1.$
\begin{lemma}\label{lemma:lower-bound}
    Let $A$ be any real constant. If $a$ and $b$ are positive integers with $a\le b$, then $c(a,b)\ge a+b-\frac{b}{\log^{A}b}$ for all sufficiently large $b.$
\end{lemma}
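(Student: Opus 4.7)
My plan is to adapt the $2$-adic valuation argument from the proof of Lemma~\ref{lemma:density-helper} to the present setting. Fix $a \le b$ and consider any integer $c$ with $c < a+b - b/\log^A b$; the task is to show that $\phi(a!)\phi(b!) \nmid \phi(c!)$ for all sufficiently large $b$. The case $c < b$ is immediate, since then $\phi(c!) < \phi(b!) \le \phi(a!)\phi(b!)$, so I may assume $b \le c < a+b - b/\log^A b$. The goal then reduces to proving $\nu_2(\T(a,b;c)) < 0$.

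Starting from the identity $\T(a,b;c) = \tfrac{c!}{a!\,b!}\prod_{p \le a}\tfrac{p}{p-1}\prod_{b < p \le c}\tfrac{p-1}{p}$, I would compute the $2$-adic valuation factor by factor. Theorem~\ref{legendre} gives $\nu_2(c!/(a!b!)) = (c-a-b) + (s_2(a) + s_2(b) - s_2(c))$; the first summand is strictly less than $-b/\log^A b$ by hypothesis, while the second is $\O(\log b)$. Applying Lemma~\ref{vqbound} with $q = 2$ three times (for the prime products up to $a$, $b$, and $c$) gives the contribution of the remaining factors as $\tfrac{2c}{\log c} - \tfrac{2a}{\log a} - \tfrac{2b}{\log b} + \O(b/\log^K b)$, modulo an $\O(1)$ term from the $p=2$ factor in $\prod_{p \le a} p/(p-1)$.

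The one non-routine observation is that this prime-power main term is itself nonpositive: since $\log a, \log b \le \log(a+b)$, I get $a/\log a + b/\log b \ge (a+b)/\log(a+b) > c/\log c$, which gives $2c/\log c - 2a/\log a - 2b/\log b < 0$. Combining these estimates yields
\[\nu_2(\T(a,b;c)) < -\frac{b}{\log^A b} + \O(\log b) + \O\!\left(\frac{b}{\log^K b}\right),\]
which is negative once $K$ is chosen larger than $A+1$ and $b$ is sufficiently large. Hence $\T(a,b;c)$ fails to be an integer, completing the plan.

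The main obstacle I anticipate is essentially bookkeeping: ensuring the error terms remain uniform in $a \in [1,b]$ and that the restriction $c \ge b$ is harmless. Unlike the setting of Lemma~\ref{lemma:density-helper}, here the dominant negative contribution comes directly from $c - a - b < -b/\log^A b$, which is much stronger than the digit-sum error $\O(\log b)$ and the Siegel--Walfisz error $\O(b/\log^K b)$, so no quantitative handle on $2c/\log c - 2a/\log a - 2b/\log b$ beyond its sign is required.
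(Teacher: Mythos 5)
Your proposal is correct and follows essentially the same route as the paper: compute $\nu_2(\T(a,b;c))$ via Legendre's formula and Lemma \ref{vqbound}, observe that the main term $2(c/\log c - a/\log a - b/\log b)$ is nonpositive (the paper deduces this from $c/\log c\le c/\log b$ and $\log a\le\log b$ rather than your superadditivity of $x/\log x$, but the two observations are interchangeable), and let the deficit $c-a-b<-b/\log^A b$ dominate the $\O(\log b)$ digit-sum and $\O(b/\log^K b)$ error terms. Your explicit handling of the trivial case $c<b$ is a small tidiness bonus over the paper, which simply asserts $b\le c$.
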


\begin{proof}
    Assume the contrary that $c=c(a,b)\le a+b-\frac{b}{\log^A b}$ is true infinitely often.
    We calculate $\nu_2$ of $\T(a,b;c)$ and claim that it is negative for all large $b$, which would contradict the assumption that $\T(a,b;c)$ is an integer. From Theorem \ref{legendre}, we have that $\nu_2(n!)=n+\O(\log n).$ Note that $b\le c\le a+b\le 2b.$ By Lemma \ref{vqbound},
    \begin{alignat*}{2}
        \nu_2(\T(a,b;c)) &= &&\nu_2(c!)-\nu_2(a!)-\nu_2(b!)+1+\nu_2\left(\prod_{b<p\le c} (p-1)\right)
                      \\&{}&&-\nu_2\left(\prod_{p\le a} (p-1)\right)\\
                        &= &&{}c-a-b + 2\left(\frac{c}{\log c}-\frac{a}{\log a}-\frac{b}{\log b}\right)+\O\left(\frac{b}{\log^{K}b}\right)\\
    \end{alignat*}

\vspace{-0.5cm}

\noindent  for some constant $K$ where we choose $K>A.$ Note that
     \begin{align*}
         \frac{c}{\log c}-\frac{a}{\log a}-\frac{b}{\log b} &\le \frac{c}{\log b} - \frac{a}{\log a}-\frac{b}{\log b}\\
                                                            &= \frac{c-b}{\log b}-\frac{a}{\log a}\\
                                                            &\le \frac{c-b-a}{\log a} <0.
     \end{align*}
     Therefore, 
     $$\nu_2(\T(a,b;c))\le -\frac{b}{\log^A b} + \O\left(\frac{b}{\log^K b}\right).$$
     Since $K>A,$ the above quantity is negative for all sufficiently large $b,$ which is a contradiction to the supposition that $\T(a,b;c)$ is an integer. The proof is complete.
\end{proof}

By Lemma \ref{lemma:lower-bound}, we see that $$r(a,b)\ge 1 - \frac{b}{(a+b)\log^A b}\ge 1-\frac{1}{2\log^A b}$$
for all sufficiently large $b.$ It is now evident that 
\begin{align*}
\liminf_{a,b\to\infty}r(a,b) \ge 1.
\end{align*}

What remains now to be proven is that there exist arbitrarily large $a$ and $b$
such that $r(a,b)\le 1.$ Indeed, $\T(a,\phi(a!)-1;\phi(a!))=1$ for all $a\ge4$,
so, $r(n,\phi(n!)-1)<1$ for all $n\ge4.$ This completes the proof of Theorem \ref{theorem:liminf}.

However, we prove the following result, which says that, given $a$, many $b$ satisfy $r(a,b)\le 1.$ The proof also demonstrates a natural way to construct such pairs.

\begin{proposition}\label{lemma:low-ratio}
    Let $a$ be a fixed positive integer. A positive proportion of positive integers $b$ satisfy $r(a,b)\le 1.$
\end{proposition}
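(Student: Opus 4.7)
The plan is to first derive a clean closed form for $\T(a,b;a+b)$ that eliminates the $\phi(b!)$ factor, and then realise the resulting divisibility condition on an explicit arithmetic progression of positive density.

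First I would telescope the ratio $\phi((a+b)!)/\phi(b!)$. Starting from $\phi(n!)=n!\prod_{p\le n}(1-1/p)$, a direct computation yields
\begin{equation*}
\frac{\phi((a+b)!)}{\phi(b!)} \;=\; \prod_{j=1}^{a}(b+j)\cdot\prod_{b<p\le a+b}\frac{p-1}{p}.
\end{equation*}
For each $j\in\{1,\dots,a\}$ for which $p:=b+j$ is prime, the factor $p$ in the denominator of the second product cancels the matching $b+j$ in the first, leaving a factor of $p-1$. Setting $f(n)=n-1$ if $n$ is prime and $f(n)=n$ otherwise, this collapses to the identity
\begin{equation*}
\T(a,b;a+b) \;=\; \frac{1}{\phi(a!)}\prod_{j=1}^{a} f(b+j),
\end{equation*}
so $r(a,b)\le 1$ holds precisely when $\phi(a!)\mid\prod_{j=1}^{a}f(b+j)$.

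Next I would produce many such $b$ via a single congruence. Write $D:=\phi(a!)$; the cases $a\in\{1,2\}$ give $D=1$ and the proposition is trivial, so assume $a\ge 3$ and $D\ge 2$. For every $b\equiv -1\pmod{D}$ with $b+1\ge 2D$, the number $b+1$ is a proper multiple of $D\ge 2$ and hence composite; consequently $f(b+1)=b+1$ is divisible by $D$, which by itself forces $D\mid\prod_{j=1}^{a}f(b+j)$, and therefore $\T(a,b;a+b)\in\ZZ$.

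The progression $b\equiv -1\pmod{D}$ has density $1/D>0$ in $\NN$, and only finitely many of its elements are excluded, so the set of $b$ with $r(a,b)\le 1$ has density at least $1/\phi(a!)>0$. The crux of the argument is spotting the telescoping identity for $\T(a,b;a+b)$; once that is in hand, the positive-density construction is essentially forced. One can of course enlarge the density by loosening the condition to $b\equiv -j_q\pmod{q^{\nu_q(D)}}$ for an arbitrary choice $j_q\in\{1,\dots,a\}$ at each prime $q\mid D$ and combining via CRT, but this refinement is not needed for the stated proposition.
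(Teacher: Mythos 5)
Your proof is correct, but it takes a genuinely different route from the paper's. You telescope $\phi((a+b)!)/\phi(b!)$ into $\prod_{j=1}^{a}f(b+j)$ with $f(n)=n-1$ for prime $n$ and $f(n)=n$ otherwise, which turns the problem into the exact criterion $\phi(a!)\mid\prod_{j=1}^{a}f(b+j)$, and you then satisfy it with the single congruence $b\equiv-1\pmod{\phi(a!)}$: for large $b$ the term $b+1$ is a proper multiple of $\phi(a!)\ge 2$, hence composite, hence left untouched by $f$ and divisible by $\phi(a!)$. The paper instead writes $\T(a,b;a+b)=\binom{a+b}{a}\prod_{p\le a}\tfrac{p}{p-1}\prod_{b<p\le a+b}\tfrac{p-1}{p}$, absorbs the large primes into the binomial coefficient, reduces to showing $\prod_{p\le a}(p-1)\mid\binom{a+b}{a}$, and arranges this via Kummer's theorem by forcing carries with $b\equiv-a\pmod{\prod_{p\le a}(p-1)}$. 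Your argument is more elementary (no appeal to Kummer) and has the bonus of an if-and-only-if characterization of when $r(a,b)\le 1$; the trade-off is quantitative: your progression has density $1/\phi(a!)$, which is roughly $1/a!$, whereas the paper's modulus $\prod_{p\le a}(p-1)=e^{a(1+o(1))}$ yields the much larger proportion $\exp(a(-1+o(1)))$ recorded in the remark following the proposition. Both comfortably establish the stated positive proportion, and your closing CRT refinement, while not needed, is consistent with this picture.
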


\begin{proof}
    We provide a way to construct such $b.$ At the end, it will be clear that we can ensure $b\ge a,$ so let us assume $b\ge a$ from now onwards. Recall that
    $$\T(a,b;a+b)=\binom{a+b}{a}\prod_{p\le a}\frac{p}{p-1}\prod_{b<p\le a+b}\frac{p-1}{p}.$$
Set $D=\prod_{p\le a}(p-1)$. Clearly, $\prod_{b<p\le a+b}p$ divides
    $\binom{a+b}{a}$, and $D$ is relatively prime to $\prod_{b<p\le a+b} p.$
    Therefore, it suffices to prove that there exist infinitely many $b$ such
    that $D$ divides $\binom{a+b}{a}.$ Let the prime factorisation of $D$ be
    $\prod_{i=1}^{m}q_i^{\alpha_i}.$ By Theorem \ref{kummer}, we want the
    addition of $a$ and $b$ in base-$q_i$ to have at least $\alpha_i$ carries for each
    $i=1,2,\ldots,m.$ Therefore, we choose $b$ such that $b\equiv -a
    \Mod{q_i^{\alpha_i}}$ for each $i=1,2,\ldots, m,$ and we take 
$$b=k\prod_{i=1}^m q_i^{\alpha_i}-a=k\prod_{p\le a}(p-1)-a$$
for any positive integer $k$ such that $b\ge a.$ Thus, the proportion of positive integers $b$ such that $r(a,b)\le 1$ is positive.
\end{proof}

\begin{remark}
    It can be noted that the size of $b$ obtained in the above proof is around $\exp{a(1+o(1))},$ and the proportion of such $b$ is at least $\exp a(-1+o(1)).$
\end{remark}

\section{Upper bound on \texorpdfstring{$r(a,b)$}{r(a,b)}}\label{section:limsup}
In this section, we prove Theorem \ref{theorem:limsup}. 
\subsection{Setup.} 
Set $c=a+b+ \floor*{\frac{a+b}{8}}.$ Without loss of generality, assume $a\le b.$
We wish to prove that $\T(a,b;c)$ is an integer for all large $a$ and $b.$
It is easy to see that  
$$\T(a,b;c)=\binom{a+b}{a}(a+b+1)\cdots(a+b+\floor*{\tfrac{a+b}{8}})\prod_{p\le a}\frac{p}{p-1} \prod_{b<p\le c}\frac{p-1}{p}.$$
For brevity, call the above expression $\T.$ We prove that $\T$ is an integer by
showing that $\nu_q(\T)\ge 0$ for all primes $q.$ It is clear that $\nu_q(\T)\ge
0$ for all $q>a.$ So, let us assume that $q\le a$ for the rest of the argument.
We split our proof into three cases: $q\le 7,~q\in [8,a^{1/2}]$ and $ q\in(a^{1/2}, a]$. Take $a> 64$ so that the intervals do not overlap.

\subsection{Bounding \texorpdfstring{$\nu_q(\T)$}{v\_q(T)} for \texorpdfstring{$q\le 7$}{q≥7}.}
This case is straightforward using Lemma \ref{vqbound}. We have
\begin{align*}
    \nu_q(\T) &\ge \nu_q((a+b+1)\cdots(a+b+\floor*{\tfrac{a+b}{8}}))-\nu_q\left(\prod_{p\le a}(p-1)\right)\\
             &\ge \frac 1q \floor*{\frac{a+b}{8}} + \O\left(\frac{a}{\log a}\right),
\end{align*}
which is positive for all large $b$ as $a\le b.$

\subsection{Bounding \texorpdfstring{$\nu_q(\T)$}{v\_q(T)} for \texorpdfstring{$q\in[8,a^{1/2}]$}{q∈[8,√a]}.}
In this case, the estimates of Lemma \ref{vqbound} do not work because
the error terms get large as $q$ gets big. Hence, we require an alternative
bound for $\nu_q\left(\prod_{p\le a}(p-1)\right),$ which is stated as follows.

\begin{lemma}\label{lemma:vqbound-alternative}
    If $q> 7,$ then 
    $$\nu_q\left(\prod_{p\le a}(p-1)\right)\le \frac{0.23a}{q-1}+\frac{7\log a}{\log q}.$$
\end{lemma}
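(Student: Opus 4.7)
The plan is to bound $\nu_q\!\left(\prod_{p\le a}(p-1)\right) = \sum_{p\le a}\nu_q(p-1)$ by a sieve argument that leverages the hypothesis $q > 7$. The key observation is that any prime $p$ contributing to this sum must satisfy $q \mid p-1$, hence $p \ge q+1 > 8$, so $\gcd(p,210) = 1$ where $210 = 2\cdot 3\cdot 5\cdot 7$. (Primes $p \le 7$ contribute zero, since $\nu_q(p-1) = 0$ for $p-1\in\{1,2,4,6\}$ and $q > 7$.) Therefore,
$$\nu_q\!\left(\prod_{p\le a}(p-1)\right) \;\le\; \sum_{\substack{m\in[2,a]\\ \gcd(m,210)=1}}\nu_q(m-1).$$

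Next I would write $\nu_q(m-1) = \sum_{n\ge 1}[q^n\mid m-1]$ and interchange the sums to express the right-hand side as $\sum_{n\ge 1} M_n$, where
$$M_n := \#\{m\in(1,a] \colon \gcd(m,210)=1,\ m\equiv 1\pmod{q^n}\}.$$
For each fixed $n$, apply Möbius inversion over divisors $d\mid 210$, combined with the Chinese Remainder Theorem (valid because $\gcd(q,210) = 1$): the conditions $d\mid m$ and $m\equiv 1\pmod{q^n}$ pin $m$ to a single residue class modulo $dq^n$. This yields
$$M_n \;=\; \frac{8a}{35\,q^n} + E_n,$$
where $\tfrac{8}{35} = \phi(210)/210$ is the expected density, and $|E_n|$ is controlled by the floor-function errors across the $2^{\omega(210)} = 16$ squarefree divisors of $210$.

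Summing over $n$ from $1$ to $\lfloor\log_q(a-1)\rfloor$, the geometric main term totals $\tfrac{8a}{35(q-1)}$, which is strictly less than $\tfrac{0.23\,a}{q-1}$ because $\tfrac{8}{35}\approx 0.2286 < 0.23$, leaving slack $(0.23-\tfrac{8}{35})\tfrac{a}{q-1}\approx 0.0014\,a/(q-1)$. The error contribution $\sum_n|E_n|$ ranges over at most $\lfloor\log_q a\rfloor$ nonzero terms and is, a priori, of size $C\log_q a$ for some constant $C$. The main obstacle is to sharpen the naive bound $C=16$ to $C\le 7$; I would attack this by recasting each $\sum_n M_n$ in closed form via Legendre's formula (Theorem \ref{legendre}) applied to $\nu_q(\lfloor a/d\rfloor!)$ for each $d\mid 210$, then exploiting sign cancellations among the Möbius-weighted base-$q$ digit sums, and absorbing any residual excess into the slack of the main term. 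Care must also be taken that the bound remains uniform in the relevant range $q\in[8,a^{1/2}]$ where the lemma is applied.
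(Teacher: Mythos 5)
Your main term is correct and your sieve is, at heart, the same one the paper uses, but the error term $\frac{7\log a}{\log q}$ is not established by your argument --- and you flag this yourself. Sieving the progression $m\equiv 1\pmod{q^n}$ by coprimality to $210$ runs the inclusion--exclusion over the $16$ squarefree divisors of $210$, so each level $n$ carries a floor-function error of size up to $16$, giving $\sum_n|E_n|\le 16\log_q a$ rather than $7\log_q a$. Neither of your proposed remedies closes this. Absorbing the excess into the slack $(0.23-\tfrac{8}{35})\tfrac{a}{q-1}\approx\tfrac{a}{700(q-1)}$ only works when $a/(q-1)$ dominates $\log_q a$; the lemma is stated for all $q>7$ with no upper restriction on $q$ (and even in the application range $q\le a^{1/2}$ this absorption needs $a$ large), so at best you would prove a weaker statement than the one claimed. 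The ``sign cancellations among M\"obius-weighted base-$q$ digit sums'' is a genuinely harder computation that you have not carried out and that is not obviously uniform in $q$.

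The missing idea is a small but decisive reorganization. Every prime $p$ contributing to the sum is odd, and $q$ is odd, so $q^n\mid p-1$ is equivalent to $2q^n\mid p-1$; equivalently, among the two residues mod $2q^n$ that are $\equiv 1\pmod{q^n}$, only the residue $1$ is odd. Hence, instead of sieving $m\equiv 1\pmod{q^n}$ by all four primes $2,3,5,7$, count primes in the progression $\{2q^nk+1:\ 1\le k\le \lfloor a/(2q^n)\rfloor\}$ and sieve only by $3,5,7$. That is exactly what the paper does via Lemma \ref{lemma:pie-bound}: inclusion--exclusion over the $8$ divisors of $105$, of which the leading term is exact, gives at most $\tfrac{48}{105}\cdot\tfrac{a}{2q^n}+7<\tfrac{0.46}{2}\cdot\tfrac{a}{q^n}+7$ primes per level --- the same main term $\tfrac{8}{35}\cdot\tfrac{a}{q^n}$ you obtain, but with error exactly $7$ per level and hence $\tfrac{7\log a}{\log q}$ in total after summing over the at most $\log_q a$ values of $n$. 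With that modification your argument becomes the paper's proof.
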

To prove this, we use the following preliminary lemma.

\begin{lemma}\label{lemma:pie-bound}
    Let $d>7$ be a positive integer relatively prime to $3,5$ and $7.$ Then, the number of primes in $\{d+1,2d+1,3d+1,\ldots,nd+1\}$ is at most $0.46n+7.$
\end{lemma}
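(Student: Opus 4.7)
My plan is to sieve out the residues of $k$ that force $kd+1$ to be divisible by a small prime. The critical observation is that since $d>7$, for every $k\ge 1$ we have $kd+1\ge d+1\ge 9$, so any prime in the set $\{d+1,2d+1,\ldots,nd+1\}$ exceeds $7$ and is therefore coprime to $105 = 3\cdot 5\cdot 7$. Hence the number of primes in the set is bounded above by
\[ N := \#\{k\in[1,n] : \gcd(kd+1,\,105)=1\}, \]
and the whole problem reduces to estimating $N$.

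Next, for each $p\in\{3,5,7\}$, the hypothesis $\gcd(d,p)=1$ means that the condition $p\mid kd+1$ cuts out exactly one residue class of $k$ modulo $p$. Writing $A_p$ for the set of $k\in[1,n]$ in this class, the Chinese Remainder Theorem gives that $A_p\cap A_q$ and $A_3\cap A_5\cap A_7$ are also single residue classes modulo $pq$ and $105$, respectively. The count of integers in $[1,n]$ lying in a single residue class modulo $m$ differs from $n/m$ by at most $1$ in absolute value, and inclusion-exclusion then produces
\[ |A_3\cup A_5\cup A_7| \;\ge\; n\left(\tfrac13+\tfrac15+\tfrac17-\tfrac1{15}-\tfrac1{21}-\tfrac1{35}+\tfrac1{105}\right) - 7 \;=\; \tfrac{57n}{105} - 7, \]
where the $-7$ absorbs the seven floor errors (one per term of inclusion-exclusion), each bounded by $1$ in absolute value.

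Taking complements in $[1,n]$ gives $N\le \frac{48n}{105}+7$, and since $48\cdot 50 = 2400 < 2415 = 23\cdot 105$ we have $\frac{48}{105}<0.46$, so $N\le 0.46n+7$, as claimed. There is no real obstacle in the argument: the only conceptual step is the first one, where the hypothesis $d>7$ eliminates any need to treat the primes $3,5,7$ as exceptional members of the set, after which everything reduces to bookkeeping on a standard three-prime sieve.
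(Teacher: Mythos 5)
Your proposal is correct and follows essentially the same route as the paper: a Legendre/Eratosthenes sieve over the primes $3,5,7$ with inclusion--exclusion, bounding each of the seven residue-class counts by $n/m\pm 1$ to get $\tfrac{48}{105}n+7<0.46n+7$. You are somewhat more explicit than the paper about why $\gcd(d,105)=1$ makes each divisibility condition a single residue class in $k$ and why $d>7$ rules out $3,5,7$ themselves appearing in the set, but the argument is the same.
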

\begin{proof}
    The proof is just a routine application of Inclusion-Exclusion principle. The number of elements in the set divisible by some positive integer $k$ is either $\floor*{\frac{n}{k}}$ or $\ceil*{\frac{n}{k}}.$ Applying Eratosthenes' sieve with respect to primes $3,5$ and $7$, the number of primes in the set is at most
    \begin{align*}
        n-\floor*{\frac n3}-\floor*{\frac n5}-\floor*{\frac n7}+\ceil*{\frac n{15}}+\ceil*{\frac n{21}}+\ceil*{\frac n{35}}-\floor*{\frac{n}{105}}< 0.46n+7,
    \end{align*}
    as desired.
\end{proof}

\begin{proof}[Proof of Lemma \ref{lemma:vqbound-alternative}]
    Similar to the proof of Lemma \ref{vqbound}, by a double counting argument and using Lemma \ref{lemma:pie-bound},
    \begin{align*}
        \nu_q\left(\prod_{p\le a}(p-1)\right) &= \sum_{2q^m<a}\pi(a;2q^m,1)\\
                                              &\le\sum_{2q^m<a}\left(\frac{0.46a}{2q^m}+7\right) \\
                                                &< \frac{0.23a}{q-1}+\frac{7\log a}{\log q},
    \end{align*}
    as desired.
\end{proof}

By a standard application of Theorem \ref{legendre},
\begin{align*}
    \nu_q((a+b+1)\cdots(a+b+\floor*{\tfrac{a+b}{8}})) &\ge \nu_q(\floor*{\tfrac{a+b}{8}}!)\\
                                                      &\ge \frac 1{q-1} \floor*{\frac{a+b}{8}}-\frac{\log(a+b)}{\log q} + \O\left(\frac 1{\log q}\right)\\
                                                      & \ge \frac{a+b}{8(q-1)}-\frac{\log b}{\log q} + \O\left(\frac 1{\log q}\right).
\end{align*}
Therefore,
\begin{align*}
    \nu_q(\T) &\ge \frac{a+b}{8(q-1)}-\frac{\log b}{\log q} + \O\left(\frac 1{\log q}\right)- \left(\frac{0.23a}{q-1}+\frac{7\log a}{\log q}\right).
\end{align*}
Since $a\le b$ and $q\le a^{1/2},$ we have
\begin{align*}
    \nu_q(\T) &\ge \frac{0.02a}{q-1} + \frac{b-a}{8(q-1)} - \frac{8\log b}{\log q}+\O\left(\frac 1{\log q}\right)\\
            &\ge 0.02\left(a^{1/2} + \frac{b-a}{400(q-1)} - \delta \log b\right)
\end{align*}
for some positive constants $\delta.$
Note that the above expression is non-negative if $a>\delta^2 \log^2 b.$ Again,
if $a<\delta^2\log^2 b,$ we see that $\frac{b-a}{400(q-1)}$ is greater
than $\delta \log b$ for all sufficienlty large $b.$ Thus, $\nu_q(\T)\ge 0$ and
this case is complete.

\subsection{Bounding \texorpdfstring{$\nu_q(\T)$}{v\_q(T)} for \texorpdfstring{$q\in(a^{1/2},a]$}{q∈(√a,a]}.}
Estimates like Lemma \ref{vqbound} and Lemma \ref{lemma:vqbound-alternative}
are not suitable as the margin for slack is quite thin in this
case. A careful analysis involving sharp bounds is required.
Note that $$\nu_q\left(\prod_{p\le a} (p-1)\right) = \pi(a;2q,1).$$
So, we are interested in upper bounding $\pi(a;2q,1)$ sharply.
\begin{lemma}\label{lemma:sharp-pi-bound}
    Let $q>7$ be a prime. The number of primes in $\{2q+1,4q+1,\ldots,2qk+1\}$ is at most $\floor*{\tfrac k2}+1,$ where equality can occur if $k=4$ and $q=11.$
\end{lemma}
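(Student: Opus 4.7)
The plan is to sieve the arithmetic progression $\{2qi+1 : 1 \le i \le k\}$, whose common difference $2q$ is coprime to $3$, $5$, and $7$ since $q>7$. For each $p\in\{3,5,7\}$, the condition $p \mid 2qi+1$ restricts $i$ to a single residue class $r_p \pmod p$, and because $2q\cdot 0+1 = 1 \not\equiv 0 \pmod p$, we have $r_p \in \{1,\dots,p-1\}$. Let $B_p \subseteq \{1,\dots,k\}$ be the set of indices in this class. Since every element $2qi+1 \ge 23 > 7$, any $i \in B_3 \cup B_5 \cup B_7$ makes $2qi+1$ a proper multiple of a small prime and hence composite, so the number of primes in the set is at most $k - |B_3 \cup B_5 \cup B_7|$. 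The task therefore reduces to showing $|B_3 \cup B_5 \cup B_7| \ge \lceil k/2 \rceil - 1$.

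I would expand the union via inclusion-exclusion and estimate each term $|B_d|$ for $d \in \{3,5,7,15,21,35,105\}$; by CRT each $B_d$ is itself the intersection of $\{1,\dots,k\}$ with a single nonzero residue class modulo $d$, so $|B_d|$ differs from $k/d$ by less than $1$. The main term in the signed sum computes to $(57/105)k$, reflecting the \emph{bad} density $1-(2/3)(4/5)(6/7)=57/105$; collecting all the bounded floor/ceiling errors gives $|B_3 \cup B_5 \cup B_7| \ge (57/105)k - C$ for an absolute constant $C$. Because $57/105>1/2$ strictly, this beats $\lceil k/2\rceil - 1$ for all $k$ beyond some explicit threshold $K_0$.

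For the finitely many $k<K_0$, I would finish by a direct case analysis on the triple $(r_3,r_5,r_7)$, which has only $2\cdot 4 \cdot 6 = 48$ possible values: for each configuration one tallies $|B_3 \cup B_5 \cup B_7| \cap \{1,\dots,k\}$ and checks the inequality. In particular, the equality case $k=4, q=11$ corresponds to $(r_3,r_5,r_7)=(2,2,6)$: the only excluded index in $\{1,2,3,4\}$ is $i=2$ (yielding $2q\cdot 2+1 = 45$), and the three survivors $23$, $67$, $89$ happen to all be prime, realizing $\lfloor 4/2\rfloor+1 = 3$.

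The main obstacle will be that the density gap $\tfrac{57}{105}-\tfrac{1}{2} = \tfrac{9}{210}$ is small, so the floor and ceiling errors in the inclusion-exclusion push $K_0$ to a moderate size; the remaining small-$k$ casework must therefore be carried out carefully to cover the near-tight configurations, especially those in which sieving by $3$ alone already accounts for most of the exclusion (as in the equality case, where $B_5$ and $B_7$ contribute nothing inside $\{1,2,3,4\}$).
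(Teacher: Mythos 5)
Your proposal is correct and follows essentially the same route as the paper: the inclusion--exclusion sieve by $3,5,7$ giving a forced-composite density $\tfrac{57}{105}>\tfrac12$ is exactly the paper's Lemma \ref{lemma:pie-bound}, which disposes of all $k\ge 174$, and the remaining small $k$ are handled by a finite check over the $48$ admissible configurations --- your triples $(r_3,r_5,r_7)$ are in bijection with the residues of $q$ modulo $105$ coprime to $105$ that the paper's PARI/GP script enumerates. The differences are purely bookkeeping.
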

\begin{proof}
    By Lemma \ref{lemma:pie-bound}, the number of primes in the set is at most
    $0.46k+7,$ which is less than or equal to $\floor*{\frac k2} + 1$ for all
    $k\ge 174.$  
    We are going to use computational aids for the rest of $k.$
    Fix some $k\le 173.$ Consider the aforementioned set modulo $3\cdot 5\cdot 7=105.$
    Obviously, $q$ can only be congruent to residues relatively prime to $105.$
    We check all such residues one by one. A number in the set cannot be a prime if it is not relatively prime to $105.$ 
    This procedure can be automated with a naive PARI/GP \cite{pari/gp} script for all $k\le 173,$ completing the proof.
\end{proof}
Similar to the previous cases, note that 
$$\nu_q\left(\floor*{\frac{a+b}{8}}!\right)\ge \floor*{\frac 1q\floor*{\frac{a+b}{8}}} \ge\floor*{\frac{a}{4q}}$$
as $a\le b.$ And by Lemma \ref{lemma:sharp-pi-bound}, $$\pi(a;2q,1) \le \floor*{\frac 12\floor*{\frac{a}{2q}}}+1=\floor*{\frac{a}{4q}}+1,$$
where we use the basic fact that $\floor*{\tfrac{a}{4q}}=\floor*{\tfrac 12\floor*{\tfrac{a}{2q}}}.$
Thus,
\begin{align*}
    \nu_q(\T) &\ge \nu_q\left(\floor*{\frac{a+b}{8}}!\right)+\nu_q\left(\prod_{p\le a}\frac{p}{p-1}\right)\\
             &\ge \floor*{\frac{a}{4q}} + 1 - \left(\floor*{\frac{a}{4q}}+1\right)=0,
\end{align*}
and the proof of Theorem \ref{theorem:limsup} is complete.

\section{Does the limit exist?}\label{section:limsup-dickson}
Until now, we have proved results on the inferior and superior limits. A
question that arises naturally is whether the limit exists. Numerical evidence
suggests that the sequence $r(a,b)$ keeps on fluctuating (see Figure \hyperref[fig2]{2}).

In this section, we provide a convincing argument for the non-convergence of
the sequence using Dickson's conjecture and prove Theorem \ref{theorem:limsup-dickson}.

\begin{hypothesis}[Dickson's Conjecture]\label{dickson}
    Let $a_1,a_2,\ldots,a_k$ be $k$ integers and $b_1,b_2,\ldots,b_k$ be $k$ positive integers. There are infinitely many positive integers $n$ for which all the elements in the set $\{a_1 + b_1n, a_2 + b_2n, \ldots, a_k + b_kn\}$ are primes, unless there is a congruence condition preventing this, i.e., there doesn't exist a prime $q$ (necessarily at most $k$) such that the product $(a_1+b_1n)(a_2+b_2n)\cdots (a_k+b_k n)$ is divisible by $q$ for all $n\in\{0,1,\ldots, q-1\}.$
\end{hypothesis}

Let $q>17$ be a sufficiently large prime such that $2q+1,~6q+1$ and $8q+1$ are primes but $iq+1$
is not prime for each $i\in\{10,12,14,16,18\}$.
Existence of infinitely many such primes $q$ can be proven under Dickson's conjecture.

\begin{lemma}\label{lemma:dickson}
    Assuming Dickson's conjecture, there exist infinitely many primes $q$ such
    that $2q+1,6q+1$ and $8q+1$ are primes but $iq+1$ is not prime for each
    $i=10,12,14,16,18.$
\end{lemma}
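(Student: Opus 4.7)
The plan is to invoke Dickson's conjecture on a carefully chosen family of four linear forms in a parameter $n$ whose simultaneous primality produces a prime $q$ with $2q+1$, $6q+1$, $8q+1$ all prime, after first constraining the residue of $q$ modulo a product of small primes so that each of $10q+1, 12q+1, 14q+1, 16q+1, 18q+1$ is forced to carry a small prime divisor.

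First I would pick the auxiliary congruences. A short tabulation of $i \pmod p$ for the relevant $i$ and small $p$ shows that $q \equiv 2 \pmod 3$ makes both $10q+1$ and $16q+1$ divisible by $3$; $q \equiv 1 \pmod 5$ makes $14q+1$ divisible by $5$; $q \equiv 4 \pmod 7$ makes $12q+1$ divisible by $7$; and $q \equiv 3 \pmod{11}$ makes $18q+1$ divisible by $11$. These combine via the Chinese Remainder Theorem into a single congruence $q \equiv R \pmod{1155}$. A one-line check modulo each of $3, 5, 7, 11$ then shows that on this residue class none of $q$, $2q+1$, $6q+1$, $8q+1$ is automatically divisible by any of $3, 5, 7, 11$.

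Next, writing $q = 1155n + R$, the four quantities we want to be prime become linear forms $L_1(n) = 1155 n + R$, $L_2(n) = 2310 n + (2R+1)$, $L_3(n) = 6930 n + (6R+1)$, $L_4(n) = 9240 n + (8R+1)$, and I would apply Dickson's conjecture to $\{L_1, L_2, L_3, L_4\}$. The admissibility check is routine: for $p \in \{3,5,7,11\}$ each $L_i$ reduces to a nonzero constant modulo $p$ by the previous step; for $p = 2$ the forms $L_2, L_3, L_4$ are odd and $L_1$ is odd on one parity of $n$; and for $p \geq 13$ each $L_i$ has at most one root modulo $p$, so at most four residues are excluded out of at least thirteen. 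Dickson's conjecture therefore yields infinitely many $n$ for which $L_1(n), L_2(n), L_3(n), L_4(n)$ are simultaneously prime, and for every sufficiently large such $n$ the prime $q = L_1(n)$ fulfils all the requirements of the lemma, since the five ``bad'' values $iq+1$ are larger than their forced small prime divisors and hence composite.

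The only real work lies in the preliminary search for congruences: one must kill all five values $i \in \{10, 12, 14, 16, 18\}$ using small primes, without accidentally forcing any of $q$, $2q+1$, $6q+1$, $8q+1$ to share a factor with those primes. The fortunate coincidence $10 \equiv 16 \pmod 3$ lets a single congruence handle two bad forms, leaving three others which can then be dispatched individually by $5, 7, 11$. Once the correct residues are identified, the remainder of the argument is essentially bookkeeping on admissibility.
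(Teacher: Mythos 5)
Your proposal is correct and follows essentially the same route as the paper: both force $q$ into a residue class modulo a product of small primes ($3,5,7,11$) so that each of $10q+1,\ldots,18q+1$ acquires a fixed small prime factor while $q,2q+1,6q+1,8q+1$ remain unobstructed, and then apply Dickson's conjecture to the four resulting linear forms. The only (cosmetic) difference is that the paper extracts its congruences modulo $7$ and $11$ from the explicit witness $q=131$, whereas you solve for the residues directly.
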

\begin{proof}
    The main idea is to find one such prime $q$ and apply Chinese Remainder Theorem.
    It is easy to see that $q$ must be greater than $7.$ Observe that $q\equiv 2 \Mod 3$ and $q\equiv 1 \Mod 5.$ Therefore, $iq+1$ is divisible by $3$ for each $i\in\{10,16\},$ and $14q+1$ is divisible by $5.$
    It is easily verified that $q=131$ satisfies the conditions of the statement. Indeed, 
    \begin{align*}
        2\cdot 131 + 1 = 263,~ 6\cdot 131+ 1= 787,~ 8\cdot 131+ 1 =1049,\\
        12\cdot 131+1=11^2\cdot 13,\qquad 18\cdot 131+ 1 =7\cdot 337.
    \end{align*}
Consider the system of congruences
\begin{align*}
    q \equiv 131 \Mod{11},\quad
    q \equiv 131 \Mod 7.
\end{align*}
This is solved by $q\equiv 54 \Mod{77}.$ So, if $q$ is of the form $77\ell
+54$, then $12q+1$ and $18q+1$ are not primes. By Dickson's conjecture, there
exist infinitely many $\ell$ such that $77\ell +54,2(77\ell +54)+1, 6(77\ell
+54)+1$ and $8(77\ell +54)+1$ are primes. No modular conditions are
preventing this because of the way we choose $q.$ The proof is complete.
\end{proof}

Take $n=8q+1$ and let
$c(n,n)=2n+m$ for some integer $m.$ By Lemma \ref{lemma:density-helper}, it is clear that $m$ is positive. Let us assume for now that $m < \frac{n-9}{4}$; it will be clear later why we are assuming this.
Recall that 
$$\T(n,n;2n+m)=\binom{2n}{n}(2n+1)(2n+2)\cdots (2n+m)\prod_{p\le n}\frac{p}{p-1}\prod_{n< p\le 2n+m}\frac{p-1}{p}.$$
We consider $\nu_q$ of the above expression. 
Since $q>17,$ and the base-$q$ representation of $n$ is $81$, hence there are no carries
in the addition $n+n$ in base-$q$. Therefore, by Theorem \ref{kummer}, $q$ does
not divide $\binom{2n}{n}.$ 
Observe that
$$\nu_q\left(\prod_{p\le n}\frac{p}{p-1}\right)\le -2,$$
and, whether $q$ divides
$\prod_{n<p\le 2n+m}(p-1)$ or not depends on how many of $9q+1,10q+1,\ldots,18q+1$
are primes. Clearly, $9q+1,11q+1,13q+1,15q+1$ and $17q+1$ are even.
The remaining $iq+1$ for each $i\in\{10,12,14,16,18\}$ are not primes by assumption.
Therefore, the product $\prod_{n<p\le 2n+m}(p-1)$ has no contribution to the exponent of $q$.
Lastly, the set $\{2n+1,2n+2,\ldots,2n+m\}$ is equivalent to $\{3,4,\ldots,m+2\}$ when considered modulo $q.$
Note that $2n+(q-2)= 17q$ is not divisible by $q^2.$
Therefore, for $\T(n,n;2n+m)$ to be an integer, we must have $m+2\ge 2q,$ which implies that $m \ge \frac{n-9}{4}.$ This is a contradiction to our initial assumption. Concluding,
$$c(n,n)\ge 2n+\frac{n-9}{4}=\frac{9n}{4}-\frac{9}{4}.$$
Thus, 
$$r(n,n) \ge \frac{9}{8} -\frac{9}{8n},$$
completing the proof of Theorem \ref{theorem:limsup-dickson}.
\begin{remark}
    By a quantitative version of Dickson's conjecture, known in its very general form as Bateman-Horn conjecture, the number of integers in $[1,x]$ which satisfy the condition of Theorem \ref{theorem:limsup-dickson} is of order at least $x/(\log x)^4$.
\end{remark}

\end{document}